\newtheorem{theorem}{Theorem}[section]
\newtheorem{proposition}[theorem]{Proposition}
\newtheorem{lemma}[theorem]{Lemma}
\theoremstyle {definition}
\newtheorem {definition}[theorem]{Definition}
\newtheorem {example}[theorem]{Example}
\theoremstyle {remark}
\newtheorem{remark}[theorem]{Remark}
\def\rank{\operatorname{rank}}
\def\ord{\operatorname{ord}}
\def\grad{\operatorname{grad}}
\def\supp{\mathrm{supp}}
\keywords{global Milnor fibration, bifurcation value, atypical value, Malgrange condition, M-tame, Newton polyhedron, non-degenerate, One dimensional fiber, trivial fibration. \\}
\begin{document}

\title[Bifurcation set]{Bifurcation set, M-tameness, asymptotic critical values and Newton polyhedrons}
\author{Nguyen Tat Thang}

\address{Institute of Mathematics, 18 Hoang Quoc Viet road, 10307 Hanoi, Vietnam.}

\email{ntthang@math.ac.vn}

\subjclass[2010]{Primary 32S20, 32S55, 55R10 ; Secondary 14R25, 14P15, 32S05.}

\begin{abstract}
Let $F=(F_1, F_2, \ldots, F_m): \mathbb{C}^n \to \mathbb{C}^m$ be a polynomial dominant mapping  with $n>m$.
In this paper we give the relations between the bifurcation set of $F$ and the set of values where $F$ is not M-tame as well as the set of generalized critical values of $F$. We also construct explicitly a proper subset  of $\mathbb{C}^m$ in terms of the Newton polyhedrons of $F_1, F_2, \ldots, F_m$ and show that it contains the bifurcation set of $F$. In the case $m= n-1$ we show that $F$ is a locally $C^{\infty}$-trivial fibration if and only if it is a locally $C^0$-trivial fibration.
 \end{abstract}
\maketitle

 \section{Introduction}
Let $F=(F_1, F_2, \ldots, F_m): \mathbb{C}^n \to \mathbb{C}^m$ be a polynomial dominant mapping  with $n>m$. It is
well-known that the mapping $F$ is a locally $C^{\infty}$-trivial fibration outside a {\it bifurcation
set} $B(F)$ (see \cite{T}). In general, the set $B(F)$ is larger than $K_0(F)$ --- the set of critical
values of $F$. It contains also the set $B_{\infty}(F)$ of {\it critical values at infinity}.
Roughly speaking, the set $B_{\infty}(F)$ consists of points at which $F$ is not a locally
$C^{\infty}$-trivial fibration at infinity (i.e., outside a compact set).

It is a natural question to ask how the set $B(F)$ can be computed. The answer was given for polynomial functions in two variables (see, for example \cite{HL}, \cite{H}, \cite{I}) where the bifurcation set is determined in terms of the topological properties of the fibers (Euler characteristic, transversal crossing with balls, ...) and for polynomials which have only isolated singularities at infinity (\cite{P}).
 
 The aim of this paper is to generalize the results in \cite{Nemethi1990} where the author proved that the bifurcation set of polynomial functions is contained in some explicit subsets of $\mathbb{C}$  for the case of polynomial maps from $\Bbb{C}^n$ to $\Bbb{C}^m$. 

In order to state the main theorems, let us introduce the following notion.
 
\begin{definition}{\rm 
Let $t\in \mathbb{C}^m$ arbitrary. The map $F$ is called to be {\it M-tame at $t$} if there does not exist a sequence $\{p_k\}_{k=1}^{\infty}$ such that
$$\|p_k\|\to \infty, F(p_k)\to t\, \, \textrm{and}\, \, \rank \begin{pmatrix}
J(F)(p_k)\\
 \overline{p_k}
\end{pmatrix}\leq m,$$
where $J(F)$ is the Jacobi matrix of $F$. We denote by $M_{\infty}(F)$ the set of $t\in \mathbb{C}^m$ at which $F$ is not M-tame. Let $M(F):= K_0(F)\cup M_{\infty}(F)$.
}
\end{definition}
When $m=1$ the notion of M-tame was introduced by A. Nemethi and A. Zaharia (see \cite{Nemethi1990} and \cite{Nemethi1992}).

 The function $\nu$ from set of linear maps $A: \Bbb{C}^n\to \Bbb{C}^m$ to the complex numbers is defined as follows (see \cite{R}):
 $$\nu(A)=\inf_{\{\omega \in \mathbb{C}^m: \|\omega \|= 1\}}\|A\omega \|.$$
The set of {\it asymptotic critical values at infinity} of $F$ is defined by (see \cite{KOS} and \cite{R})
\begin{eqnarray*}
K_\infty(F) := \{ t \in {\Bbb C}^m
& | & \textrm{there exists a sequence $x_l \rightarrow \infty$ such that}\\
&& F(x_l) \rightarrow t \ \textrm{and} \ \|x_l\|\cdot\nu (\text{d} F(x_l))\to 0\}.
\end{eqnarray*}
Let $K(F): = K_0(F) \cup K_{\infty}(F)$. We say that $K(F)$ is the set of {\it generalized
critical values} of $F$. 

It is proven in \cite{Nemethi1990} that
\begin{theorem}{\rm(See \cite{Nemethi1990})}
Let $f: \mathbb{C}^n \to \mathbb{C}$ be a polynomial function. Then
$$B(f)\subseteq  M(f).$$
\end{theorem}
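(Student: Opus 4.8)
The claim is that $B(f) \subseteq M(f) = K_0(f) \cup M_\infty(f)$ for a polynomial $f : \mathbb{C}^n \to \mathbb{C}$. Since $K_0(f) \subseteq M(f)$ trivially, the real content is that if $t_0 \notin M(f)$, then $f$ is a locally $C^\infty$-trivial fibration near $t_0$; in particular we need $t_0 \notin B_\infty(f)$. So the plan is: assume $t_0$ is a regular value of $f$ and $f$ is M-tame at $t_0$, and show $f$ is a $C^\infty$-locally trivial fibration over a small disk $D$ around $t_0$.

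**The proof plan.**

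First I would choose a small closed disk $D \subseteq \mathbb{C}$ centered at $t_0$ such that $D \cap M(f) = \emptyset$; this is possible because $M(f)$ is closed (M-tameness fails on a closed set, being defined by existence of sequences, and $K_0(f)$ is closed by properness of $f$ restricted to... well, by Sard/constructibility). On $X := f^{-1}(D)$ the map $f$ has no critical points, so $f : X \to D$ is a submersion. The obstruction to $C^\infty$-triviality is purely at infinity, so the key step is to construct, on $X$, a smooth vector field $v$ lifting $\partial/\partial t$ (i.e. $df(v) = 1$) which is \emph{rate-controlled at infinity}, so that its flow is complete. This is the classical Rabier/Palais–Smale–condition argument: M-tameness at every $t \in D$ gives a uniform lower bound — outside a large ball $B_R$ and over $D$, the quantity $\operatorname{rank}\binom{J(f)(p)}{\overline{p}}$ being $\geq 2$ (i.e. $\nabla f(p)$ and $\bar p$ linearly independent over $\mathbb{C}$) translates, via a compactness/contradiction argument, into a bound of the form $\|\nabla f(p)\| \cdot (1 + \|p\|) \geq c > 0$ for $p \in f^{-1}(D) \setminus B_R$. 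Then the gradient-type lift $v = \nabla f / \|\nabla f\|^2$ (suitably interpreted over $\mathbb{C}$, using $\overline{\nabla f}$) satisfies $\|v(p)\| \leq C(1 + \|p\|)$, so by the standard completeness criterion ($\dot x = v$, $\|v\| \lesssim \|x\|$) the flow exists for all time. Integrating this flow over a path from $t_0$ to $t$ in $D$ trivializes the fibration: $X \cong f^{-1}(t_0) \times D$ smoothly, via Ehresmann's theorem in its non-proper-but-complete-vector-field form.

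**The main obstacle.**

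The step I expect to be the crux is extracting the \emph{uniform} asymptotic inequality $\|p\| \cdot \|\nabla f(p)\| \geq c$ on $f^{-1}(D) \setminus B_R$ from the pointwise M-tameness hypothesis at each $t \in D$. M-tameness at a single $t$ forbids a sequence $p_k \to \infty$ with $f(p_k) \to t$ along which $\nabla f(p_k)$ and $\overline{p_k}$ become $\mathbb{C}$-linearly dependent; one must upgrade this to a bound that is uniform in $t$ over the compact disk $D$ and shows the \emph{norm} $\|\nabla f\|$ (not just the rank) doesn't decay too fast. This is done by a contradiction argument: if no such $c, R$ worked, one produces a sequence $p_k \to \infty$ with $f(p_k)$ staying in $D$, passing to a subsequence so $f(p_k) \to t_* \in D$, and with $\|p_k\| \cdot \|\nabla f(p_k)\| \to 0$; one then has to show this sequence (or a modification of it) violates M-tameness at $t_*$ — i.e. that the smallness of $\|p_k\|\cdot\|\nabla f(p_k)\|$ forces $\operatorname{rank}\binom{\nabla f(p_k)}{\overline{p_k}} \leq 1$ in the limit, at least after rescaling. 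For $m=1$ the rank condition is equivalent to proportionality of the two row vectors, and the relevant quantitative link between $\nu(\mathrm df)$, the Rabier function, and the $\overline{p}$-component is essentially the Kurdyka–Orro–Simon / Rabier analysis; here one should invoke (or reprove in this special case) that M-tameness at $t$ implies $t \notin K_\infty(f)$, reducing everything to the already-available completeness argument for generalized critical values. Finally I would double check that the local triviality obtained is genuinely $C^\infty$ (the vector field $v$ is smooth away from critical points, which we have excluded) and that gluing over $D$ — not just along a path — is legitimate, which follows by the usual argument that a submersion admitting a complete lift of every constant vector field on the contractible base $D$ is trivial.
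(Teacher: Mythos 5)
There is a genuine gap at the very step you flag as the crux. You propose to upgrade M-tameness over the disk $D$ to a uniform estimate $\|p\|\cdot\|\grad f(p)\|\geq c>0$ on $f^{-1}(D)\setminus B_R$ and to deduce completeness of the lifted vector field from the resulting linear growth bound. But that estimate is essentially the statement $D\cap K_\infty(f)=\emptyset$, and M-tameness does not imply it. The implication actually proved in the paper is $M_\infty(F)\subseteq K_\infty(F)$ (not M-tame $\Rightarrow$ asymptotic critical value); its contrapositive says that the Malgrange/Rabier condition implies M-tameness, which is the opposite direction from the one you need. The converse is false in general: M-tameness at a value does not force $\|p\|\cdot\|\grad f(p)\|$ to stay bounded away from $0$ (there are classical examples of polynomials, already for $m=1$, that are M-tame at a value where the Malgrange condition fails, and Example \ref{ex1} of the paper illustrates for $m=2$ that $M_\infty$ can be strictly smaller than $K_\infty$). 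So the reduction ``M-tame at $t$ implies $t\notin K_\infty(f)$'' that you want to invoke is unavailable, and your gradient lift $\overline{\grad f}/\|\grad f\|^{2}$ has no reason to be complete.

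The correct completeness mechanism under M-tameness is different, and it is exactly what the rank condition is for. Since $\rank\left(\begin{smallmatrix} J(f)(p)\\ \overline{p}\end{smallmatrix}\right)=2$ on $f^{-1}(D)\setminus B$ for a suitable large ball $B$ and small disk $D$ (this part of your compactness argument is fine), the linear system $\langle v,\grad f\rangle=1$, $\langle v,p\rangle=0$ is solvable smoothly in $p$ there. The extra constraint $\langle v,p\rangle=0$ makes the lift tangent to the spheres $\|p\|=\mathrm{const}$, so after patching with an arbitrary lift on a compact part, trajectories of the lifted field cannot escape to infinity in finite time; the flow is complete and integrating it trivializes $f$ over $D$ in the $C^\infty$ sense. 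No lower bound on $\|\grad f\|$ is needed, and none is true. This sphere-tangency trick is the content of the paper's proof (of the more general Theorem \ref{theoremBifurcationM-tame}) and is the idea missing from your proposal.
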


In Section 2 of this paper, we prove the following theorem.

\begin{theorem}\label{theoremBifurcationM-tame}
Let $F: \mathbb{C}^n \to \mathbb{C}^m$ be a polynomial map. Then
$$B(F)\subseteq  M(F)\subseteq  K(F).$$
\end{theorem}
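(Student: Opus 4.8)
The plan is to establish the two inclusions separately. As $K_0(F)$ is a common summand of $M(F)=K_0(F)\cup M_\infty(F)$ and of $K(F)=K_0(F)\cup K_\infty(F)$, the inclusion $M(F)\subseteq K(F)$ reduces to proving $M_\infty(F)\subseteq K_\infty(F)$, while for $B(F)\subseteq M(F)$ I will argue by contraposition: assuming $t_0\notin M(F)$, I will exhibit a neighbourhood of $t_0$ over which $F$ is a locally $C^\infty$-trivial fibration, so $t_0\notin B(F)$.

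For $B(F)\subseteq M(F)$ the starting point is the geometric meaning of M-tameness. A Hermitian linear-algebra computation shows that, at a point $p$ where $dF(p)$ is surjective, the fibre $F^{-1}(F(p))$ is transverse in $\bC^n\cong\bR^{2n}$ to the sphere $S_{\|p\|}$ if and only if $\overline p\notin\operatorname{span}_{\bC}\{\grad F_1(p),\dots,\grad F_m(p)\}$, i.e.\ if and only if $\rank\begin{pmatrix}J(F)(p)\\ \overline p\end{pmatrix}=m+1$. Assume now $t_0\notin M(F)$. Since $t_0\notin K_0(F)$ --- a bounded sequence of critical points whose critical values tend to $t_0$ would yield a critical point over $t_0$, whereas an unbounded such sequence would show $t_0\in M_\infty(F)$ --- there is a neighbourhood $D$ of $t_0$ with $F^{-1}(D)\cap\Sing F=\varnothing$; and a routine compactness--contradiction argument, using $t_0\notin M_\infty(F)$ again, provides $R>0$ such that, after shrinking $D$ to a ball, $S_{R'}$ meets $F^{-1}(t)$ transversally for all $t\in D$ and all $R'\ge R$. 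Now split $F^{-1}(D)$ along $S_R$. On $F^{-1}(D)\cap\overline{B}_R$ the map $F$ is a proper submersion of a manifold with boundary onto $D$ (submersive in the interior by the no-critical-point condition, submersive on the boundary $F^{-1}(D)\cap S_R$ by transversality), so Ehresmann's fibration theorem for manifolds with boundary trivialises it over the contractible $D$. On $F^{-1}(D)\cap\{\|x\|\ge R\}$, writing $\rho(x)=\|x\|^2$, the map $(F,\rho)$ onto $D\times[R^2,\infty)$ is a proper submersion of manifolds with boundary --- surjectivity of $d(F,\rho)(p)$ is exactly the transversality of $F^{-1}(F(p))$ to $S_{\|p\|}$ --- hence $C^\infty$-trivial over $D\times[R^2,\infty)$, and composing with the projection to $D$ trivialises $F$ there. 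Finally glue the two trivialisations over $D$, choosing on the outer piece lifts of $TD$ tangent to the level sets of $\rho$ (hence tangent to $S_R$) and matching them near $S_R$ on the inner piece by a partition of unity; the resulting vector fields are complete, and integrating them yields $F^{-1}(D)\cong F^{-1}(t_0)\times D$ over $D$.

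For $M_\infty(F)\subseteq K_\infty(F)$, let $t\in M_\infty(F)$ with a witnessing sequence $p_k\to\infty$, $F(p_k)\to t$, $\rank\begin{pmatrix}J(F)(p_k)\\ \overline{p_k}\end{pmatrix}\le m$. Passing to a subsequence: if infinitely many $p_k$ are critical points then $\nu(dF(p_k))=0$, so $\|p_k\|\,\nu(dF(p_k))=0\to 0$ and $t\in K_\infty(F)$; otherwise $dF(p_k)$ is surjective for all $k$ and $\overline{p_k}=\sum_{i=1}^m\lambda_i^{(k)}\grad F_i(p_k)$ for a unique $\lambda^{(k)}\in\bC^m$. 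Applying the curve selection lemma at infinity to the real algebraic set $V=\{(x,\lambda)\in\bC^n\times\bC^m:\overline x=\sum_i\lambda_i\grad F_i(x)\}$ produces a real-analytic, Puiseux-parametrised arc $s\mapsto(x(s),\lambda(s))$, $s\to 0^+$, in $V$ with $\|x(s)\|\to\infty$ and $F(x(s))\to t$. Differentiating $\|x(s)\|^2=\langle x(s),x(s)\rangle$ and substituting $\overline{x(s)}=\sum_i\lambda_i(s)\grad F_i(x(s))$ yields the Euler-type identity
\[
\tfrac12\,\frac{d}{ds}\|x(s)\|^2=\operatorname{Re}\sum_{i=1}^m\lambda_i(s)\,\frac{d}{ds}\bigl(F_i(x(s))\bigr).
\]
Here $\|x(s)\|^2$ is a convergent Laurent series in $s$ with a pole of some order $2a\ge 2$, so $\tfrac{d}{ds}\|x(s)\|^2$ has a pole of order exactly $2a+1$; on the other hand $F_i(x(s))\to t_i$ forces $F_i(x(s))$, hence $\tfrac{d}{ds}F_i(x(s))$, to stay bounded near $s=0$. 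Consequently $\|\lambda(s)\|$ has a pole of order at least $2a+1$. Since $\nu(dF(x))=\inf_{\|c\|=1}\|\sum_i\bar c_i\,\grad F_i(x)\|$, testing with $c$ proportional to $\overline{\lambda(s)}$ gives $\nu(dF(x(s)))\le\|x(s)\|/\|\lambda(s)\|$, whence $\|x(s)\|\,\nu(dF(x(s)))\le\|x(s)\|^2/\|\lambda(s)\|=O(s)\to 0$, so $t\in K_\infty(F)$.

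I expect the main obstacle to lie in the first inclusion, and to be not the two invocations of Ehresmann's theorem but the gluing step: one must manufacture complete controlled vector fields that simultaneously lift $TD$, are tangent to $S_R$ over the overlap, and are tangent to all large spheres near infinity, so that the two trivialisations fit together into a single one over $D$. This is by now a standard device in the theory of global Milnor fibrations but needs to be carried out carefully. The second inclusion, in contrast, is essentially self-contained once the Euler-type identity is in hand; the bookkeeping with Puiseux orders is routine.
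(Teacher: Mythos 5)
Your proposal is correct and follows essentially the same route as the paper: the first inclusion via the transversality-to-spheres reading of M-tameness plus controlled vector fields (the paper builds the vector fields $v^i$ directly on $F^{-1}(D)\setminus B$ rather than invoking Ehresmann twice and gluing, but this is the same argument), and the second inclusion via the curve selection lemma, the identity $\tfrac12\frac{d}{ds}\|\varphi\|^2=\operatorname{Re}\langle\lambda,\frac{d}{ds}F(\varphi)\rangle$, and the comparison of Puiseux orders. Your explicit handling of the critical-point subsequence and of the gluing step only adds detail the paper leaves implicit.
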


 In Section 3, we introduce the notion of {\it Newton non-degenerate} polynomial mappings and a proper subset $\Sigma(F)$ of $\mathbb{C}^m$ which is constructed explicitly in terms of the Newton polyhedrons of coordinate polynomials $F_1, F_2, \ldots, F_m$. Our second main theorem (Theorem \ref{theoremBifurcationNewton}) shows that if the polynomial map $F$ is Newton non-degenerate  then $B(F)\subseteq \Sigma (F)$. This is an extension of Theorem 2 in \cite{Nemethi1990}.

In the last section, we prove that for polynomial maps $F: \Bbb{C}^n\to \Bbb{C}^{n-1}$ the problem of computing bifurcation values is, in some sense, a topological problem. More precisely, we show that if $F$ is a locally $C^{0}$-trivial fibration then it is a locally $C^{\infty}$-trivial fibration (Theorem \ref{mainthmst4}).

\section{M-tameness and generalized critical values at infinity}
In this section, we will prove that the bifurcation set of a polynomial map is contained in the set of values at which the map is not M-tame, as well as the set of generalized critical values of the map.

\begin{proof}[Proof of Theorem \ref{theoremBifurcationM-tame}]

Let $t^0=(t^0_1, t^0_2, \ldots, t^0_m)$ be a regular value of $F$ such that $t^0\notin M_{\infty}(F)$. We will show that $F$ defines a trivial fibration in some neighbourhood of $t^0$. 

Indeed,  since $F$ is M-tame at $t^0$ there exist a closed ball $B$, centered at the origin, in $\mathbb{C}^n$ and a neighbourhood $D$ of $t^0$ such that
$$
\rank \begin{pmatrix}
J(F)(x)\\
 \bar{x}
\end{pmatrix}> m,\, \, x\in F^{-1}(D)\setminus B.$$
 Hence, for each $i=1,\ldots, m$, we can construct in $F^{-1}(D)\setminus B$ a smooth vector field $v^i$ such that
$$
\begin{aligned}
\langle & v^i, x\rangle = 0\\
\langle& v^i, \grad F_i\rangle= 1\\
\langle& v^i, \grad F_j\rangle= 0,\, \, j\neq i,
\end{aligned}
$$
where by $\mathrm{grad} \varphi$ of a function $\varphi$ we mean the vector $(\frac{\overline{\partial \varphi}}{\partial x_1},\ldots, \frac{\overline{\partial \varphi}}{\partial x_n})$.
By integrating those vector fields we obtain a diffeomorphism trivializing the restriction 
$$F_{|F^{-1}(D)}: F^{-1}(D)\to D.$$
Thus
$$B(F)\subseteq  M_{\infty}(F)\cup K_{0}(F)= M(F).$$

Now, let $t^0\in M_{\infty}(F)$ be an arbitrary regular value of $F$. To complete the proof, it suffices to show that $t^0\in K_{\infty}(F)$. Because $F$ is not M-tame at $t^0$, there exists a sequence $\{p_k\}_{k=1}^{\infty}\subset \mathbb{C}^n$ such that
$$\|p_k\|\to \infty, F(p_k)\to t^0\, \, \textrm{and}\, \, \rank \begin{pmatrix}
J(F)(p_k)\\
 \overline{p_k}
\end{pmatrix}\leq m.$$
Then, since  $t^0$ is a regular value of $F$ there is a sequence $\{s_k\}_{k=1}^{\infty}\subset \mathbb{C}^m$ satisfying
$$p_k= \sum_{i=1}^m s_{k, i}\cdot \grad F_i(p_k),$$
where $s_k= (s_{k, 1}, s_{k, 2}, \ldots, s_{k, m}).$ Therefore, according to the Curve Selection Lemma (see \cite{Milnor1968}, \cite{Nemethi1992}), there exist analytic curves $\varphi(s)$ in $\mathbb{C}^n$ and  $\lambda(s)=(\lambda_1(s), \lambda_2(s), \ldots, \lambda_m(s))$ in $\mathbb{C}^m$ such that
\begin{itemize}
\item [(a1)] $\lim_{s \to 0}\|\varphi(s)\|= \infty,$
\item [(a2)] $\lim_{s \to 0}F(\varphi(s))= t^0,$
\item [(a3)] $\varphi(s)= \sum_{i=1}^m\lambda_i(s) \grad F_i(\varphi(s)).$
\end{itemize}

We have
$$\nu(\text{d}F(\varphi))= \min_{\|\omega\|=1}\left\|\sum_{i=1}^m\omega_i\grad F_i(\varphi)\right\|.$$
So
\begin{align}\frac{\|\varphi\|^2}{\|\lambda\|}=\|\varphi\|\cdot \left\|\sum_{i=1}^m\frac{\lambda_i}{\|\lambda\|}\cdot \grad F_i(\varphi)\right\|\geq \|\varphi\|\cdot \nu(\text{d}F(\varphi)).\label{bdt3.2}
\end{align}

On the other hand, we have the following estimation
\begin{align}
\left|\frac{d\|\varphi\|^2}{2ds}\right| =| Re\langle \varphi, \varphi^{'}\rangle| &\leq |\langle \varphi, \varphi^{'}\rangle| \notag\\
                                           &= \left|\left\langle\sum_{i=1}^m\lambda_i \grad F_i(\varphi), \varphi^{'}\right\rangle\right|\notag\\
                                           &=\left|\left\langle\lambda, \frac{dF(\varphi)}{ds}\right\rangle\right| \leq \|\lambda\|\cdot \left\|\frac{dF(\varphi)}{ds}\right\|.\notag
\end{align}
If $F(\varphi(s))\equiv t^0$ then $\|\varphi(s)\|$ is constant, which contradicts condition (a1). Hence condition (a2) implies that we may express $F(\varphi(s))$ as follows:
$$F(\varphi(s))= t^0 + cs^{\rho} + \textit{terms of higher exponents},$$
where $c\in \mathbb{C}^m\setminus\{(0, \ldots, 0)\}$ and $\rho>0.$ In particular, we get
$$\ord(\|\frac{dF(\varphi)}{ds}\|)= \rho - 1.$$
Hence
\begin{align}
\ord(\|\varphi\|^2) - \ord(\|\lambda\|)\geq \rho >0.\label{bdt3.3}
\end{align}

It follows from (\ref{bdt3.2}) and (\ref{bdt3.3}) that
 $$\lim_{s \to 0}\|\varphi\|\cdot \nu(\text{d}F(\varphi)) = 0.$$
Combining this with (a1) and (a2) we obtain $t^0\in K_{\infty}(F)$.
\end{proof}

\begin{remark}
{\rm (i) Theorem \ref{theoremBifurcationM-tame} was proved in \cite{CT} for mixed functions and in \cite{DRT}  for
  real maps.

(ii) It follows from Theorem \ref{theoremBifurcationM-tame} that $B(F)\subseteq K(F)$. This fact was proved in  \cite{R}, \cite{G} and \cite{J}. However, the equality does not occur in general, see \cite{HT}, Proposition 3.2. The following example show that the M-tameness is indeed better in controlling the topology of the map at infinity than the generalized critical values.
}
\end{remark}

\begin{example}\label{ex1}
Let $$F=(xy-1, y^2z): \Bbb{C}^3\to \Bbb{C}^2.$$
It is easy to check that $K_0(F)= \{(-1, 0)\}$ and $B(F)= K_0(F)$, (see \cite{HT}). 

To compute $M_{\infty}(F)$, we see that
\begin{align}\label{dk1}\rank \begin{pmatrix}
J(F)(p)\\
 \overline{p}
\end{pmatrix}\leq 2\end{align}
 if and only if $y=0$ or $x\bar{x} - y \bar{y} + 2z \bar{z}=0,$ where $p = (x, y, z)$. The first case implies $(-1, 0)\in M_{\infty}(F).$ In the second case, let $(t_1, t_2):= F(p)$, one can assume that $y\neq 0$, we obtain
$$x= \frac{t_1+1}{y}, z= \frac{t_2}{y^2},$$
where $(y\bar{y})^3 - (t_1+1)(\bar{t_1}+1)y\bar{y} - 2t_2\bar{t_2}=0.$ Then, we can easily check that there exists a sequence of point $(x, y, z)$ going to infinity and satisfying (\ref{dk1}) only when $(t_1, t_2)$ tends to $(-1, 0).$ 

Thus $M_{\infty}=\{(-1, 0)\}$. It means $B(F)= K_0(F)\cup M_{\infty}(F).$ 
Nevertheless, it was shown in \cite{HT} that the set of asymptotic critical values $K_{\infty}(F)$ contains at least $(0, 0)\notin B(F).$
\end{example}

We know from \cite{R, J} that the set $K(F)$ of generalized critical values of a polynomial map $F$ is an algebraic set, but it is not easy to find a defining equation for such a set. In the later example, we give a defining equation for $B(F), M(F)$ and $K(F)$ where they are all equal. First of all, we recall the notion of {\it Gaffney number} which can be used to determine the asymptotic critical values.

\begin{definition}[See \cite{G}]{\rm
Let $A: \Bbb{C}^n\to \Bbb{C}^m$ be a linear mapping ($n\geqslant m$). Let $a=[a_{ij}]$ be the matrix of $A$. Let $M_I$, where $I=(i_1, \ldots, i_m)$, denote an $(m\times m)$ minor of $a$ given by columns indexed by $I$. Let $M_J(j)$ denote an $(m-1\times m-1)$ minor given by columns indexed by $J$ and by deleting the $j$th row (if $m=1$ we put $M_J(j)=1$). Then by the {\it Gaffney number of $A$} we mean the number
$$g(A)=\frac{(\sum_I|M_I|^2)^{1/2}}{(\sum_{J,j}|M_J(j)|^2)^{1/2}}.$$
(If this number is not defined we put $g(A)=0.$)
}
\end{definition}

It deduces from \cite[Propositions 2.2 and 2.3]{J}  or \cite[Remark 4.1]{KOS} that
\begin{proposition} \label{md2.4}
Let $F: \mathbb{C}^n \to \mathbb{C}^m $ be a polynomial mapping. Then
\begin{eqnarray*}
K_\infty(F) := \{ t \in {\Bbb C}^m
& \mid & \textrm{there exists a sequence $x_l \rightarrow \infty$ such that}\\
&& F(x_l) \rightarrow t \ \mathrm{and} \ \|x_l\|\cdot g (\text{d} F(x_l))\to 0\}.
\end{eqnarray*}
\end{proposition}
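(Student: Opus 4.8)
The plan is to reduce Proposition~\ref{md2.4} to a single linear-algebraic comparison between Rabier's function $\nu$ and the Gaffney number $g$. I claim that there is a constant $c_m>0$, depending only on $m$, such that
$$c_m\,\nu(A)\ \leq\ g(A)\ \leq\ \nu(A)$$
for every linear map $A\colon\mathbb{C}^n\to\mathbb{C}^m$, with the convention $g(A)=\nu(A)=0$ when $\rank A<m$. Granting this, the Proposition follows at once: if $x_l\to\infty$ is a sequence with $F(x_l)\to t$, then multiplying the comparison by $\|x_l\|\geq 0$ gives
$$c_m\,\|x_l\|\cdot\nu(\text{d}F(x_l))\ \leq\ \|x_l\|\cdot g(\text{d}F(x_l))\ \leq\ \|x_l\|\cdot\nu(\text{d}F(x_l)),$$
so $\|x_l\|\cdot\nu(\text{d}F(x_l))\to 0$ if and only if $\|x_l\|\cdot g(\text{d}F(x_l))\to 0$; hence the set defined by the displayed formula in Proposition~\ref{md2.4} is exactly the set $K_\infty(F)$ defined via $\nu$ in Section~1.

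To prove the comparison I would pass to singular values. Let $\sigma_1\geq\cdots\geq\sigma_m\geq 0$ be the singular values of the $m\times n$ matrix of $A$; thus $\nu(A)=\sigma_m$, as used in the proof of Theorem~\ref{theoremBifurcationM-tame}. For the numerator and the denominator of $g(A)$ I would use that the sum of the squared moduli of all $k\times k$ minors of $A$ equals $\|\Lambda^k A\|_F^2$, the squared Frobenius norm of the $k$-th exterior power of $A$, because in the standard bases of $\Lambda^k\mathbb{C}^n$ and $\Lambda^k\mathbb{C}^m$ the entries of $\Lambda^k A$ are precisely those minors. Since the eigenvalues of $AA^{*}$ are $\sigma_1^2,\dots,\sigma_m^2$, the eigenvalues of $\Lambda^k(AA^{*})$ are the $k$-fold products $\sigma_{i_1}^2\cdots\sigma_{i_k}^2$, and therefore
$$\|\Lambda^k A\|_F^2=\operatorname{tr}\Lambda^k(AA^{*})=e_k(\sigma_1^2,\dots,\sigma_m^2),$$
the $k$-th elementary symmetric function of the squared singular values. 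Applying this with $k=m$ (all $m$ rows, arbitrary $m$ columns: the minors $M_I$) and with $k=m-1$ (one row deleted, arbitrary $m-1$ columns: the minors $M_J(j)$), and using the identity $e_{m-1}(\sigma_1^2,\dots,\sigma_m^2)=\bigl(\prod_{j=1}^m\sigma_j^2\bigr)\sum_{j=1}^m\sigma_j^{-2}$ valid when all $\sigma_j>0$, one obtains, for $\rank A=m$,
$$g(A)=\frac{e_m(\sigma_1^2,\dots,\sigma_m^2)^{1/2}}{e_{m-1}(\sigma_1^2,\dots,\sigma_m^2)^{1/2}}=\Bigl(\sum_{j=1}^m\sigma_j^{-2}\Bigr)^{-1/2}.$$
Because $\sigma_m$ is the smallest of the $\sigma_j$, we have $\sigma_m^{-2}\leq\sum_{j=1}^m\sigma_j^{-2}\leq m\,\sigma_m^{-2}$, hence $\tfrac{1}{\sqrt m}\,\sigma_m\leq g(A)\leq\sigma_m$, that is, $\tfrac{1}{\sqrt m}\,\nu(A)\leq g(A)\leq\nu(A)$; so $c_m=1/\sqrt m$ works. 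If $\rank A<m$, then $\sigma_m=0$ and $e_m(\sigma_1^2,\dots,\sigma_m^2)=0$, so $g(A)=\nu(A)=0$ and the inequality holds trivially, which also takes care of the ``not defined'' clause in the definition of the Gaffney number.

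I do not expect a deep obstacle: the argument is essentially self-contained, and the points requiring care are the degenerate case $\rank A<m$ (where one must check that the conventions in the two definitions agree, including the convention $M_J(j)=1$ and $e_0=1$ when $m=1$) and the bookkeeping identifying the minors $M_I$ with the entries of $\Lambda^m A$ and the minors $M_J(j)$ with the entries of $\Lambda^{m-1} A$ --- that is, matching ``all choices of $m$ columns'' and ``all choices of $m-1$ columns together with one deleted row'' against the standard-basis entries of those exterior powers. Alternatively, the comparison $c_m\,\nu(A)\leq g(A)\leq\nu(A)$ may simply be quoted from \cite[Propositions 2.2 and 2.3]{J} or \cite[Remark 4.1]{KOS}, as the statement already indicates, and then only the two-line deduction of the first paragraph remains.
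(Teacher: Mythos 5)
Your proposal is correct, and it actually supplies more than the paper does: the paper offers no proof of Proposition~\ref{md2.4} at all, but simply deduces it from \cite[Propositions 2.2 and 2.3]{J} or \cite[Remark 4.1]{KOS}, which is precisely the two-sided comparison between $\nu$ and the Gaffney number that you prove from scratch. Your singular-value computation is sound: interpreting $\nu(A)$ as the smallest singular value $\sigma_m$ (which is how the paper uses it in the proof of Theorem~\ref{theoremBifurcationM-tame}), the identifications $\sum_I|M_I|^2=\|\Lambda^mA\|_F^2=e_m(\sigma_1^2,\dots,\sigma_m^2)$ and $\sum_{J,j}|M_J(j)|^2=\|\Lambda^{m-1}A\|_F^2=e_{m-1}(\sigma_1^2,\dots,\sigma_m^2)$ give $g(A)=\bigl(\sum_j\sigma_j^{-2}\bigr)^{-1/2}$ for $\rank A=m$, whence $\tfrac{1}{\sqrt m}\,\nu(A)\le g(A)\le\nu(A)$, and the degenerate case is handled correctly by the stated conventions; the equivalence of the two definitions of $K_\infty(F)$ then follows sequence by sequence. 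The only trade-off is economy: the paper treats this as a quotable fact from \cite{J} and \cite{KOS}, while your route makes the section self-contained at the cost of the exterior-power bookkeeping, which you have carried out correctly.
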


\begin{example}\label{ex2}
Let  $$F=(xy+1,(xyz+1)(xyz+z-1)): \Bbb{C}^3\to \Bbb{C}^2.$$ 
For $t_1\neq 1$ we have
$$F^{-1}(t_1,t_2)=\{((t_1-1)/y,y,z_i)\ | \ y\in \Bbb{C}^{*},\  i=1, 2\},$$ 
where $z_i$ satisfies
\begin{align}
t_1(t_1-1)z_i^2+z_i-(1+t_2)=0,\label{ptvd2}
\end{align}
see \cite{HT}. Note that if $t_1(t_1-1)\to 0$ then either $z_1$ or $z_2$ goes to infinity.

One can show that $$K_0(F)=\{(u, v)\mid u=1, v\in \mathbb{C}\}\cup \{(u, v)\mid 4u(u-1)(v+1)+1=0\}$$
and
$$B(F)= \{(u, v)\mid u(u-1)=0\}\cup \{(u, v)\mid 4u(u-1)(v+1)+1=0\}.$$

Let consider $(a, b)\in K_{\infty}(F)$. By the definition of asymptotic critical values and Proposition \ref{md2.4}, there exists a sequence $\{(x_k, y_k, z_k)\}$ going to infinity such that
$$(t_1^k, t_2^k):= F(x_k, y_k, z_k)\to (a, b)$$
and
$$\|(x_k, y_k, z_k)\|\cdot g (\text{d} F(x_k, y_k, z_k))\to 0.$$
 By a computation we obtain
$$g (\text{d} F(x_k, y_k, z_k))^2= \frac{|2t_1^k(t_1^k-1)z_k+1|^2 (|x_k|^2+ |y_k|^2)}{|x_k|^2+ |y_k|^2+ |2t_1^k(t_1^k-1)z_k+1|^2+ |z_k|^4(|x_k|^2+ |y_k|^2) |2t_1^k-1|^2}.$$
Put $A_k= |2t_1^k(t_1^k-1)z_k+1|^2$ and $B_k= |x_k|^2+ |y_k|^2.$ There are three cases.

{\bf Case 1}: $A_k\to \infty.$ It implies from (\ref{ptvd2}) that 
$$A_k= \left|\frac{2(t_2^k+1)}{z_k}-1\right|^2.$$
 Therefore $z_k\to 0$, and then $B_k= |x_k|^2+ |y_k|^2\to \infty.$ This follows that
$$\|(x_k, y_k, z_k)\|^2\cdot g (\text{d} F(x_k, y_k, z_k))^2= \frac{\|(x_k, y_k, z_k)\|^2}{\frac{1}{A_k}+ \frac{1}{B_k}+ \frac{|z_k|^4\cdot |2t_1^k-1|^2}{A_k}}\to \infty,$$
a contradiction.

{\bf Case 2}: $A_k\not\to \infty$ and $z_k\to \infty$. It is obvious that $t_1^k(t_1^k+1)\to 0$. In particular $(a, b)\in B(F).$

{\bf Case 3}: $A_k\not\to \infty$ and $z_k\not\to \infty$. In this case, we see that $B_k\to \infty$ and the sequence $\{|z_k|\}$ is bounded from above by some positive constant $c$. We have
$$\|(x_k, y_k, z_k)\|^2\cdot g (\text{d} F(x_k, y_k, z_k))^2\geq  \frac{\|(x_k, y_k, z_k)\|^2\cdot A_k}{1+ \frac{A_k}{B_k}+ |c|^4\cdot |2t_1^k-1|^2}.$$
Since $\|(x_k, y_k, z_k)\|\cdot g (\text{d} F(x_k, y_k, z_k))\to 0$ and $(x_k, y_k, z_k)\to \infty$ as $k\to \infty$ we get $A_k\to 0$. Combining this with the fact $A_k= |\frac{2(t_2^k+1)}{z}-1|^2$ gives us $z_k\to 2(b+1)$. Therefore
$$A_k= |2t_1^k(t_1^k-1)z_k+1|^2 \to |4a(a-1)(b+1)+1|^2.$$
That means $4a(a-1)(b+1)+1=0$, in other words $(a, b)\in B(F)$.

Thus, we have shown that $K(F)\subseteq B(F)$. Moreover, according to Theorem \ref{theoremBifurcationM-tame} that
$$B(F)\subseteq M(F) \subseteq K(F).$$
 Then
$$B(F)= M(F)= K(F).$$
\end{example}

\section{Newton polyhedrons}
In this section, we will generalize the notion of Newton non-degeneracy of polynomial functions for polynomial maps and give an estimation for the bifurcation set $B(F)$ of $F$ in terms of $F_i$'s Newton polyhedrons.

Firstly, let us recall some notations and definitions, see \cite{Kouchnirenko1976}, \cite{Nemethi1990}.
Let $f \colon {\Bbb C}^n
\rightarrow {\Bbb C}$ be a polynomial function. We express $f$ as follows:
$$f(z) := \sum_{\alpha \in {\Bbb Z}^n_{\geq 0}} a_\alpha z^{\alpha},$$
where ${\Bbb Z}_{\geq 0}$ denotes the set of non-negative integers. The {\sl support}
$\supp(f)$ is defined to be $\{ \alpha \ | \ a_\alpha \ne 0\}.$ We denote
$\Gamma_-(f)$ to be the convex hull of the set $\{0\} \cup \textrm{supp}(f).$
The {\sl Newton boundary at infinity} $\Gamma_{\infty}(f)$ is by definition the
union of the closed faces of the polyhedron $\Gamma_-(f)$ which do not contain
the origin. Here and below, by face we shall understand face of {\it any}
dimension. For each closed face $\Delta$ of $\Gamma_{\infty}(f)$ we denote by
$f_{ \Delta}$ the polynomial $\sum_{\alpha \in \Delta} a_\alpha z^{\alpha}.$ 

The polynomial $f$ is called {\sl Newton non-degenerate} if for each face
$\Delta \in \Gamma_{\infty}(f),$ the system of equations
$$\frac{\partial f_\Delta}{\partial z_1} = \frac{\partial f_\Delta}{\partial z_2} = \cdots = \frac{\partial f_\Delta}{\partial z_n}
= 0$$ has no solutions in $({\Bbb C}^{*})^n.$ The polynomial $f$ is called
{\em convenient} if the intersection of $\supp(f)$ with each coordinate axis is
non-empty.

Let
$\overline{\textrm{supp}(f)}$ be the convex hull in $\mathbb{R}^n$ of the set $\supp(f).$ A closed face $\Delta$ of $\overline{\supp(f)}$ is called {\em  bad}
if
\begin{enumerate}
  \item [(i)] the affine subvariety of dimension = $\dim (\Delta)$ spanned by $\Delta$ contains the origin, and
  \item [(ii)] there exists a hyperplane $H \subset {\Bbb R}^n$ with equation $a_1\alpha_1 + a_2 \alpha_2 + \cdots + a_n \alpha_n = 0,$
where $\alpha_1, \alpha_2, \ldots, \alpha_n$ are the coordinates in ${\Bbb R}^n,$ such that
\begin{enumerate}
  \item [(ii$_a$)] there exist $i$ and $j$ with $a_i \cdot a_j < 0,$ and
  \item [(ii$_b$)] $H \cap \overline{\supp(f)} = \Delta.$
\end{enumerate}
\end{enumerate}
More geometrically, the condition (ii$_a$) says that the hyperplane $H$ intersects the region of ${\Bbb R}^n$ whose coordinates are all positive. We denote by $\mathcal B$ the set of bad faces of
$\overline{\supp(f)}.$ For $\Delta \in \mathcal B,$ we define
$${\Sigma}^{'}_0(f_\Delta) := \{ f_\Delta(z^0) \ | \ z^0 \in ({\Bbb C}^{*})^n \textrm{ and } \grad f_\Delta(z^0) = 0 \}.$$
Let 
$$\Sigma_\infty(f) := \cup_{\Delta \in {\mathcal B}} {\Sigma}^{'}_0(f_\Delta).$$
It is clear that ${\Sigma}^{'}_0(f_\Delta) \subset K_0(f_\Delta).$ This,
together with an algebraic version of Sard's theorem (see \cite{Benedetti1990}),
yields that $\Sigma_\infty(f)$ is a finite set.

The following result gives an estimation for the bifurcation set $B(f)$ of $f$ in terms of
its Newton boundary at infinity.
\begin{theorem} \label{TheoremBroughton1988}
{\rm (\cite{Kouchnirenko1976}, \cite{Broughton1988}, \cite{Nemethi1990}) } Let $f \colon {\Bbb C}^n
\rightarrow  {\Bbb C}$ be a Newton non-degenerate polynomial function. Then the following
statements hold:
\begin{enumerate}
  \item [(i)] If $f$ is convenient, then $B(f) = K_0(f).$
  \item [(ii)] If $f$ is not convenient, then $B(f) \subseteq K_0(f) \cup \Sigma_\infty(f) \cup \{f(0) \}.$
\end{enumerate}
\end{theorem}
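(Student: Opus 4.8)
The plan is to deduce the bound on $B(f)$ from Theorem~\ref{theoremBifurcationM-tame} and then to locate $M_\infty(f)$ by a Puiseux-arc argument organized around the Newton polyhedron, in the spirit of \cite{Broughton1988} and \cite{Nemethi1990}. Since $K_0(f)\subseteq B(f)$ always holds and Theorem~\ref{theoremBifurcationM-tame} gives $B(f)\subseteq K_0(f)\cup M_\infty(f)$ for $m=1$, it is enough to show that every regular value $t^0\in M_\infty(f)$ lies in $\Sigma_\infty(f)\cup\{f(0)\}$, and that no such $t^0$ exists when $f$ is convenient. One half of the convenient case comes for free: if $\supp(f)$ meets every coordinate axis then for any hyperplane $\langle a,\cdot\rangle=0$ through the origin whose normal changes sign the support has points with $\langle a,\cdot\rangle$ of both signs (evaluate at the axis points), so this hyperplane cannot support $\overline{\supp(f)}$ and there are no bad faces; hence $\mathcal B=\emptyset$ and $\Sigma_\infty(f)=\emptyset$ whenever $f$ is convenient.

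Next I would produce the arc. Fix a regular value $t^0\in M_\infty(f)$; failure of M-tameness gives $p_k\to\infty$ with $f(p_k)\to t^0$ along which $\grad f(p_k)$ is proportional to $\overline{p_k}$ (the rank-$\le1$ condition, with $\grad f(p_k)\ne0$ because $t^0$ is regular), and the Curve Selection Lemma yields a real analytic arc $\varphi(s)$ and an analytic scalar $\lambda(s)$, $s\to0^+$, with $\|\varphi(s)\|\to\infty$, $f(\varphi(s))\to t^0$ and $\grad f(\varphi(s))=\lambda(s)\varphi(s)$. Writing Puiseux expansions $\varphi_i(s)=a_is^{q_i}+(\text{higher order})$ and setting $I=\{i:\varphi_i\not\equiv0\}$ (nonempty, with $a_i\ne0$, $q_i\in\bZ$ for $i\in I$ and $\varphi_i\equiv0$ for $i\notin I$), the condition $\|\varphi\|\to\infty$ forces $q_{i_0}<0$ for some $i_0\in I$.

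The core is the Newton bookkeeping. Only the monomials of $f$ supported on the variables indexed by $I$ survive along $\varphi$, so I would fix a weight $\tilde q\in\bR^n$ extending $q|_I$ with $\tilde q_j$ very large for $j\notin I$, set $d=\min_{\alpha\in\overline{\supp(f)}}\langle\tilde q,\alpha\rangle$, and let $\Delta$ be the face where this minimum is attained (automatically supported on $I$); the lowest-order term of $f(\varphi(s))$ is $s^{d}f_\Delta(a)$ with $a=(a_i)_{i\in I}\in(\bC^{*})^I$. If $d>0$ the minimum is not attained at the origin, which forces $f(0)=0$ and $f(\varphi(s))\to0=f(0)$, so $t^0=f(0)$. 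If $d<0$ then $\Delta$ misses the origin, hence is a face of $\Gamma_\infty(f)$, and one must show the arc conditions are incompatible with Newton non-degeneracy along $\Delta$; in particular, when $f$ is convenient one always has $d\le c_{i_0}q_{i_0}<0$ using an axis point $c_{i_0}e_{i_0}\in\supp(f)$, so only this impossible case occurs and the convenient case is done. Finally, if $d=0$ the supporting hyperplane $\langle\tilde q,\cdot\rangle=0$ passes through the origin and its normal $\tilde q$ has the required sign change, so $\Delta$ should be recognized as (lying in) a bad face; then comparing $s$-orders in $\grad f(\varphi)=\lambda\varphi$ coordinate by coordinate, together with the Euler relation $\sum_{i\in I}\tilde q_iz_i\,\partial_{z_i}f_\Delta\equiv0$ that holds because $\Delta$ lies on $\langle\tilde q,\cdot\rangle=0$, forces $\grad f_\Delta(a)=0$, whence $t^0=f_\Delta(a)\in\Sigma^{'}_0(f_\Delta)\subseteq\Sigma_\infty(f)$. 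Combined with $K_0(f)\subseteq B(f)$ this gives both statements.

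I expect the real obstacle to lie in the cases $d\le0$. For $d=0$ one must confirm that $\langle\tilde q,\cdot\rangle=0$ really cuts $\overline{\supp(f)}$ in a genuine bad face --- meeting all of conditions (i), (ii$_a$), (ii$_b$) of the definition --- rather than merely passing through the origin, and one must dispose of the degenerate sub-cases where $\Delta$ is too small (for instance a single vertex) or where $\grad f_\Delta(a)$ does not vanish outright; these should be eliminated either by the arc conditions themselves forcing distinct $q_i$'s to coincide (contradicting $\|\varphi\|\to\infty$) or by passing to a sub-face with a smaller index set and iterating. For $d<0$ the clash with non-degeneracy is not visible at leading order and demands tracking successive terms of $f(\varphi)$ and of $\grad f(\varphi)$; this successive-order estimate is the most delicate part of the argument. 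A secondary nuisance throughout is the order bookkeeping for the anti-holomorphic quantities coming from the convention $\grad\varphi=(\overline{\partial\varphi/\partial x_1},\dots,\overline{\partial\varphi/\partial x_n})$, which I would handle by working with the real parameter $s$ and comparing $\ord_s$ and moduli rather than the series themselves.
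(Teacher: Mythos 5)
The paper does not actually prove this theorem: it is quoted from \cite{Kouchnirenko1976}, \cite{Broughton1988}, \cite{Nemethi1990}. The right in-paper benchmark is therefore the proof of Theorem \ref{theoremM-tameNewton}, whose $m=1$ case, combined with Theorem \ref{theoremBifurcationM-tame}, is essentially the statement at hand. Your skeleton — reduce to $M_\infty(f)\subseteq\Sigma_\infty(f)\cup\{f(0)\}$, produce a Puiseux arc by curve selection, and sort by the minimal weight $d$ of $\tilde q$ on the Newton polyhedron — is exactly that proof's skeleton, and your observations that convenience kills all bad faces and forces $d<0$, and that $d>0$ forces $t^0=f(0)$, are correct.

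However, there is a genuine gap, and it sits precisely where you flag uncertainty: the case $d<0$ (and the sub-case of $d=0$ where $\grad f_\Delta(a)\neq0$) is not proved, and statement (i) rests entirely on it. Your diagnosis is also off: no ``successive-order estimate'' is needed, and non-degeneracy is not the tool that kills $d<0$. The missing mechanism is the Euler-relation argument of Theorem \ref{theoremM-tameNewton}, Case 1, which works entirely at leading order. Write the arc relation in the paper's form $\varphi(s)=\lambda(s)\grad f(\varphi(s))$ and compare leading terms coordinatewise: with $I'=\{i\in I:\ d+\rho-q_i=q_i\}$ one gets $e\,\partial_i f_\Delta(a)=\overline{a_i}$ for $i\in I'$ and $\partial_i f_\Delta(a)=0$ for $i\in I\setminus I'$, and every $q_i$ with $i\in I'$ equals $\min_{i\in I}q_i<0$. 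If $d<0$ then $f_\Delta(a)=0$ (otherwise $f(\varphi(s))\sim f_\Delta(a)s^{d}\to\infty$, contradicting $f(\varphi)\to t^0$), so the quasi-homogeneity identity $\sum_{i\in I}q_i a_i\,\partial_i f_\Delta(a)=d\,f_\Delta(a)=0$ collapses, after discarding the vanishing $i\notin I'$ terms and dividing by the common nonzero weight, to $\sum_{i\in I'}a_i\overline{a_i}=0$ — impossible unless $I'=\emptyset$. And if $I'=\emptyset$ then $\grad f_\Delta(a)=0$ on the torus, which non-degeneracy forbids for the face $\Delta\in\Gamma_\infty(f)$ (this is the only place non-degeneracy enters); for $d=0$ this same dichotomy is what lands $t^0=f_\Delta(a)$ in $\Sigma'_0(f_\Delta)$. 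Without this argument your proof of (i) and of the $d<0$ branch of (ii) is an announcement, not a proof. A smaller point to nail down in the $d=0$ case is that $\Delta$ really satisfies condition (ii$_a$) of the definition of a bad face (a sign change in the normal $\tilde q$), which requires a word when $I=\{1,\dots,n\}$ and all $q_i<0$.
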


Now, let  $F= (F_1, F_2, \ldots, F_m): \Bbb{C}^n \to  \Bbb{C}^m$ be a polynomial mapping. Assume that $F_i(0, 0, \ldots, 0)=0$. In the following definition, we generalize the notion of Newton non-degeneracy for polynomial maps (see \cite{Oka} for the local case). 

\begin{definition}{\rm We denote by $\Gamma_{-}(F)$ the following set:
$$\Gamma_{-}(F):= \Gamma_{-}(F_1)\times \cdots \times \Gamma_{-}(F_m).$$
The dual space of $\Bbb{R}^n$ can be canonically identified with $\Bbb{R}^n$ itself by the Euclidean inner product. By a {\it covector} we mean the integral dual vector. We use column vectors to show the dual vectors. For a given covector $P= (p_1, \ldots, p_n)^t$, for each $\beta= (\beta_1, \ldots, \beta_n)\in ~\Bbb{R}^n$, $P(\beta)$ is defined by $\beta P= \sum_{i=1}^n\beta_ip_i$. Denote $\Delta(P; F)= ~(\Delta(P; F_1), \ldots, \Delta(P; F_m)),$ where $\Delta(P; F_i)$ is the face of the polyhedron $\Gamma_{-}(F_i)$ on which the restriction $P|_{\Gamma_{-}(F_i)}$ attains its minimum. We define $\Gamma_{\infty}(F)$ to be the set of $m$-tuples $\Delta(P; F)$ in $\Gamma_{-}(F)$ with $(0, \ldots, 0)\notin \Delta(P; F)$.

The map $F$ is called {\em Newton non-degenerate} (or {\it non-degenerate} for short) if
$$\{a : \rank(J((F_1)_{\Delta_1}, (F_2)_{\Delta_2}, \ldots, (F_m)_{\Delta_m})(a) < m\} \cap (\Bbb{C}^*)^n= \emptyset$$
for any $m$-tuples $(\Delta_1, \Delta_2, \ldots, \Delta_m)$ in $\Gamma_{\infty}(F)$.

For each $\Delta= (\Delta_1, \Delta_2, \ldots, \Delta_m)$, where $\Delta_i\in \mathcal B(F_i)$  for all $i=1, \ldots, m$, set
\begin{align*}
{\Sigma}^{'}_0(F_{\Delta}) := \{ &({(F_1)}_{\Delta_1}(z^0), {(F_2)}_{\Delta_2}(z^0), \ldots, {(F_m)}_{\Delta_m}(z^0)) \ \mid \ z^0 \in ({\Bbb C}^*)^n \, \textrm{and}\\
&  \rank (J((F_1)_{\Delta_1}, (F_2)_{\Delta_2}, \ldots, (F_m)_{\Delta_m})(z^0)) < m \}.
\end{align*}
Put
$$\Sigma_\infty(F) := \cup_{\Delta \in {\mathcal B(F_1)}\times {\mathcal B(F_2)}\times \cdots \times {\mathcal B(F_m)}} {\Sigma}^{'}_0(F_\Delta).$$
One notes that $\Sigma_\infty(F)\subset \Bbb{C}^m$ is a semi-algebraic subset of dimension less than $m$.
}\end{definition}

\begin{theorem}\label{theoremM-tameNewton}
Let $F = (F_1, F_2, \ldots, F_m): \Bbb{C}^n \to  \Bbb{C}^m$ be a non-degenerate polynomial map. Then
$$M_{\infty}(F) \subseteq \Sigma_\infty(F) \cup \bigcup_{i=1}^m \{t=(t_1, t_2, \ldots, t_m) \in \Bbb{C}^m : t_i = F_i (0, 0, \ldots, 0)\}.$$
\end{theorem}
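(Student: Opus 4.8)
The plan is to argue by contradiction using the Curve Selection Lemma, in the spirit of the proof of Theorem \ref{TheoremBroughton1988} in \cite{Nemethi1990}. Suppose $t^0 = (t^0_1, \ldots, t^0_m) \in M_\infty(F)$ but $t^0_i \neq F_i(0)$ for all $i$. By definition of $M_\infty(F)$ there is a sequence $p_k \to \infty$ with $F(p_k) \to t^0$ and $\rank\begin{pmatrix} J(F)(p_k) \\ \overline{p_k}\end{pmatrix} \le m$. As in the proof of Theorem \ref{theoremBifurcationM-tame}, the rank condition means that at each $p_k$ the vector $\overline{p_k}$ lies in the span of $\grad F_1(p_k), \ldots, \grad F_m(p_k)$ (generically; one has to be slightly careful if the $\grad F_i(p_k)$ are themselves dependent, but that locus is contained in $K_0(F_\Delta)$-type sets and can be folded into $\Sigma_\infty(F)$). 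Applying the Curve Selection Lemma in the real-algebraic set cut out by these conditions, I obtain a real-analytic curve $\varphi(s) = (\varphi_1(s), \ldots, \varphi_n(s))$, $s \in (0,\varepsilon)$, and analytic $\lambda_i(s)$ with $\|\varphi(s)\| \to \infty$ as $s \to 0$, $F(\varphi(s)) \to t^0$, and $\varphi(s) = \sum_i \lambda_i(s)\,\grad F_i(\varphi(s))$ — equivalently $\overline{\varphi_j(s)} = \sum_i \lambda_i(s)\,\partial F_i/\partial x_j(\varphi(s))$ for all $j$.

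Next I would extract the Newton data. Let $J = \{ j : \varphi_j \not\equiv 0\}$; since $\|\varphi\| \to \infty$, $J$ is nonempty and for $j \in J$ write $\varphi_j(s) = c_j s^{-q_j} + \textit{higher order in } s$ with $c_j \neq 0$; the condition $\|\varphi\| \to \infty$ forces $q_j > 0$ for at least one $j \in J$. Set $P = (p_1, \ldots, p_n)^t$ where $p_j = q_j$ for $j \in J$ and, for $j \notin J$, choose $p_j$ large enough that the face selection below is governed only by the coordinates in $J$. Restricting attention to the torus coordinates $x_j$, $j \in J$, the leading term of $F_i(\varphi(s))$ along the curve is $(F_i)_{\Delta_i}(c)\,s^{-P \cdot (\cdot)}$-type, where $\Delta_i = \Delta(P; F_i)$ is the face of $\Gamma_-(F_i)$ on which $P$ attains its minimum. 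Because $F_i(\varphi(s)) \to t^0_i$, the leading exponent of $F_i(\varphi)$ must be nonnegative; I would show that if this exponent is strictly positive then $(F_i)_{\Delta_i}$ would have to vanish at the leading coefficient point, and if the exponent is zero then $t^0_i = (F_i)_{\Delta_i}(c)$ and $\Delta_i$ is forced to contain $0$ unless $0 \notin \Delta_i$ — and if every $\Delta_i$ contains $0$ then the only constant contribution is $F_i(0)$, contradicting $t^0_i \neq F_i(0)$. Hence at least one $\Delta_i$ does not contain the origin, so $\Delta := (\Delta_1, \ldots, \Delta_m) \in \Gamma_\infty(F)$, and moreover each $\Delta_i$ that genuinely contributes is a bad face in the sense of the definition preceding Theorem \ref{TheoremBroughton1988} — here the hyperplane witnessing badness is $\{\alpha : P \cdot \alpha = P \cdot \delta\}$ for $\delta \in \Delta_i$, and condition (ii$_a$) (some $a_i a_j < 0$) comes from the fact that some $q_j > 0$ while, since $F_i(\varphi) \not\to \infty$, there must be cancellation forcing a negative coordinate among the $p_j$'s along the relevant support, or the curve leaves the torus in a way one handles separately. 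Finally, substituting the curve into the identity $\overline{\varphi_j} = \sum_i \lambda_i \partial F_i/\partial x_j(\varphi)$ and comparing leading terms shows $\grad (F_1)_{\Delta_1}, \ldots, \grad (F_m)_{\Delta_m}$ become linearly dependent at the leading coefficient point $z^0 = (c_j)_{j} \in (\mathbb{C}^*)^n$ (after adjusting the components outside $J$ to nonzero constants), i.e. $\rank J((F_1)_{\Delta_1}, \ldots, (F_m)_{\Delta_m})(z^0) < m$. This exhibits $t^0 \in \Sigma'_0(F_\Delta) \subseteq \Sigma_\infty(F)$, a contradiction.

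The main obstacle I anticipate is the bookkeeping around the faces $\Delta_i$ that do contain the origin versus those that do not, and correctly matching the abstract "bad face" conditions (i)–(ii) with the exponent data $(q_j)$ of the curve: one must simultaneously ensure (a) the selected $m$-tuple lies in $\Gamma_\infty(F)$ (some face avoids $0$), (b) each contributing face is bad so that its contribution actually lands in $\mathcal B(F_i)$, and (c) the gradient-dependence survives passage to leading terms — the last point requires checking that the leading exponents on the two sides of $\overline{\varphi_j} = \sum \lambda_i \partial F_i/\partial x_j(\varphi)$ match up for the indices $j$ realizing the minimum, which is where non-degeneracy is contradicted. A secondary technical nuisance is the case where the $\grad F_i(p_k)$ are themselves dependent (so the curve only gives $\overline{\varphi} \in \text{span}$ trivially); this should be absorbed by noting it already forces membership in a $\Sigma'_0$-type locus. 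Apart from these, the estimates are routine Newton-polyhedron valuations and I would not expect surprises. The final term $\bigcup_i \{t_i = F_i(0)\}$ in the statement is precisely the escape hatch for the degenerate situation where all $\Delta_i \ni 0$, so the theorem is stated so as to make the contradiction argument close.
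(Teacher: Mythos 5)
Your overall strategy (Curve Selection Lemma, Puiseux-type expansions of the curve, comparison of leading exponents against the Newton polyhedra, and an appeal to non-degeneracy) is the same as the paper's, but two steps of your argument are genuinely wrong, and they are exactly the two places where the real work happens. First, your claim that ``if every $\Delta_i$ contains $0$ then the only constant contribution is $F_i(0)$, contradicting $t^0_i\neq F_i(0)$'' is false: when $0\in\Delta_i$ the minimal exponent $d_i$ is $0$ and the limit of $F_i(\varphi(s))$ is $(F_i)_{\Delta_i}(a)$, the sum of \emph{all} monomials of $F_i$ on that face evaluated at the leading coefficients --- not just the constant term (e.g.\ $F=xy$, $\varphi=(s^{-1},s)$ gives limit $1\neq F(0)$). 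This is precisely the case that produces $t^0\in\Sigma_\infty(F)$, not a contradiction. You have the two alternatives swapped: in the paper, the gradient dependence $\sum_j e_j\,\grad (F_j)_{\Delta_j}(a)=0$ at $a\in(\mathbb{C}^*)^I$ together with non-degeneracy \emph{forces} $(\Delta_1,\dots,\Delta_m)\notin\Gamma_\infty(F)$, i.e.\ every $\Delta_j$ contains the origin, hence $d_j=0$, the $\Delta_j$ are bad faces, and $t^0_j=(F_j)_{\Delta_j}(a)$ lands in $\Sigma'_0(F_\Delta)$. Your version, in which some $\Delta_i$ avoids the origin so that $\Delta\in\Gamma_\infty(F)$, cannot end with ``$t^0\in\Sigma'_0(F_\Delta)$'': that set is only defined for tuples of bad faces, and a face whose affine span misses the origin is not bad.

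Second, your assertion that substituting the curve into $\overline{\varphi_i}=\sum_j\lambda_j\,\partial F_j/\partial x_i(\varphi)$ and ``comparing leading terms'' yields a linear dependence among the $\grad (F_j)_{\Delta_j}$ at the leading point is only valid when, for every $i$, the leading exponent of the right-hand side is strictly smaller than $\alpha_i=\ord\varphi_i$, so that the leading coefficients must cancel among themselves. The case where the exponents match for some $i$ (the paper's set $I'\neq\emptyset$) gives instead $\sum_j e_j\,\partial(F_j)_{\Delta_j}/\partial x_i(a)=\overline{a_i}\neq 0$, and no rank drop follows. Ruling this case out is the content of the paper's Case~1: one uses the Euler relation $d_j\,(F_j)_{\Delta_j}(a)=\sum_i\alpha_i a_i\,\partial(F_j)_{\Delta_j}/\partial x_i(a)$ together with $d_j\,(F_j)_{\Delta_j}(a)=0$ (which comes from $F_j(\varphi(s))\to t^0_j\neq 0$) to derive $\sum_{i\in I'}a_i\overline{a_i}=0$, impossible since $a_i\neq 0$. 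Your proposal contains no substitute for this computation, so as written it has a hole exactly where the hypothesis $t^0_i\neq F_i(0)$ for all $i$ is actually used.
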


\begin{proof}
Without loss of generality, we may assume that $F_i (0, 0, \ldots, 0)= 0 $ for all $i=1, \ldots, m$. Let $t^0= (t^0_1, \ldots, t^0_m) \in M_{\infty}(F)$ such that
$t^0_i \neq 0$ for all $i=1, \ldots, m$. We need to prove that $t^0 \in \Sigma_\infty(F)$. 

Indeed, since $t^0 \in M_{\infty}(F)$, according to the Curve Selection Lemma, there exist analytic curves $\varphi(s)$ in $\mathbb{C}^n$ and  $\lambda(s)=(\lambda_1(s), \lambda_2(s), \ldots, \lambda_m(s))$ in $\mathbb{C}^m$, such that
\begin{itemize}
\item [(b1)] $\lim_{s \to 0}\|\varphi(s)\|= \infty,$
\item [(b2)] $\lim_{s \to 0}F(\varphi(s))= t^0,$
\item [(b3)] $\varphi(s)= \sum_{i=1}^m\lambda_i(s) \grad F_i(\varphi(s)).$
\end{itemize}

Set $I := \{i \ | \ \varphi_i \not\equiv 0\}.$ Because  (b1) then $I \ne \emptyset.$ For each $i
\in I$ we express
$$\varphi_i(s)= a_is^{\alpha_i} + \, \textit{terms of higher exponents},$$
where $a_i\neq 0, \alpha_i\in \Bbb{Z}$ and $\min_{i \in I} \alpha_i < 0.$ Similarly, let $J := \{j \ | \ \lambda_j \not\equiv 0\}.$ Because (b3) we have $J \ne \emptyset.$ 
For each $j
\in J,$ we also express
$$\lambda_j(s)= e_js^{\rho_j} + \, \textit{terms of higher exponents},$$
where $e_j \in \Bbb{C} \setminus \{0\}.$

Since $t^0_j\neq 0$ for all $j=1, \ldots, m$, it follows from (b2) that $F_j(\varphi(s))\neq 0$ for all $s$ small enough. Hence, the restriction of ${F_j}$ on $\Bbb{C}^I$ is non-trivial.  Therefore $\Gamma_{\infty}(F_j) \cap {\Bbb R}^I \ne \emptyset.$ (Here $\Bbb{C}^I:= \{x= (x_1, \ldots, x_n)\in \Bbb{C}^n \mid x_i=0, i\in I\}$; $\Bbb{R}^I$ is defined similarly). Let $d_j$ be the minimum of the linear function $P:= \sum_{i\in I}\beta_i \alpha_i$ on  $\Gamma_{-}(F_j) \cap {\Bbb R}^I.$ 
Let $\Delta_j:= \{\beta\in \Gamma_{-}(F_j) \cap {\Bbb R}^I\mid P(\beta)= d_j\}$. Then, for $i\in I$, we may rewrite (b3) as follows:
\begin{align}\label{dt(*)}
\sum_{j\in J}e_j\frac{\partial (F_j)_{\Delta_j}}{\partial x_i}(a)s^{d_j+\rho_j-\alpha_i} + \cdots = \overline{a_i}s^{\alpha_i} + \cdots,
\end{align}
where $a=(a_i)\in (\Bbb{C^*})^I$. Note that the set $\Delta_j\subset \Bbb{R}^I$ contains exponents $\beta= (\beta_1, \ldots, \beta_n)$ of monomials of $(F_j)_{\Delta_j}$ where $\beta_i=0$ for $i\notin I$. It follows that $(F_j)_{\Delta_j}$ does not depend on variables $x_i$ with $i\notin I$.

Denote
$$I^{'}= \{i \in I \mid \min_{j \in J}(d_j+ \rho_j - \alpha_i)= \alpha_i\}$$
and
$$J^{'}= \{j\in J \mid d_j + \rho_j= \min_{l\in J}(d_l+ \rho_l)\}.$$
We see that $i\notin I^{'}$ if and only if
\begin{align}
\sum_{j\in J^{'}}e_j\frac{\partial (F_j)_{\Delta_j}}{\partial x_i}(a)=0.\label{dangthuc3.5}
\end{align}
Moreover, if  $i\in I^{'}$ then
$$\sum_{j\in J^{'}}e_j\frac{\partial (F_j)_{\Delta_j}}{\partial x_i}(a)=\overline{a_i}.$$
We consider the following possibilities.

{\bf Case 1}: {\em The set $I'$ is non-empty.} Then, for all $j\in J$ we have
$$F_j(\varphi(s))= (F_j)_{\Delta_j}(a)s^{d_j}+ \, \textit{terms of higher exponents}.$$
If $(F_j)_{\Delta_j}(a)\neq 0$ then $d_j\geq 0$, unless $F_j(\varphi(s))\to \infty$ as $s\to 0$, contradicts to (b2). However, if  $d_j>0$ then $F_j(\varphi(s))\to 0$ and therefore $t^0_j=0= F_j(0, 0, \ldots, 0)$, contradicts to the hypothesis. Thus, we always have
\begin{align*}
d_j\cdot (F_j)_{\Delta_j}(a)=0.
\end{align*}
According to the Euler relation, we get
$$0= d_j\cdot (F_j)_{\Delta_j}(a)= \sum_{i\in I}\alpha_ia_i\frac{\partial (F_j)_{\Delta_j}}{\partial x_i}(a).$$
Then
\begin{align*}
0&= \sum_{j\in J^{'}}\sum_{i\in I}\alpha_i a_i e_j  \frac{\partial (F_j)_{\Delta_j}}{\partial x_i}(a) \\
  &= \sum_{i\in I^{'}, j\in J^{'}}\alpha_i a_i e_j \frac{\partial (F_j)_{\Delta_j}}{\partial x_i}(a) 
+ \sum_{i\notin I^{'}}\alpha_i a_i \left(\sum_{j\in J^{'}}e_j\frac{\partial (F_j)_{\Delta_j}}{\partial x_i}(a)\right).
\end{align*}
Consider this equation with (\ref{dangthuc3.5}) we obtain the following
$$\sum_{i\in I^{'}, j\in J^{'}}\alpha_i a_i e_j \frac{\partial (F_j)_{\Delta_j}}{\partial x_i}(a)=0.$$

 Furthermore, we observe that for all $i\in I^{'}$ then $\alpha_i = \min_{l=1, \ldots, m}\frac{d_l+\rho_l}{2}$ and according to (\ref{dt(*)}), for all $i\in I$ then  $\min_{l=1, \ldots, m}\frac{d_l+\rho_l}{2}\leq \alpha_i$.  Therefore, all numbers $\alpha_i$ with $i\in I^{'}$ are equal and non-zero. This, together with the above equation yields
$$\sum_{i\in I^{'}, j\in J^{'}} a_i e_j \frac{\partial (F_j)_{\Delta_j}}{\partial x_i}(a)=0.$$
It means, from (\ref{dt(*)}), that 
$$\sum_{i\in I^{'}}a_i \overline{a_i}=0.$$
This is impossible due to $a_i\neq 0$ for all $i\in I$.

{\bf Case 2}: {\em The set $I'$ is empty.} For all $i=1, \ldots, n$ we have
$$\sum_{j\in J^{'}}e_j\frac{\partial (F_j)_{\Delta_j}}{\partial x_i}(a)=0.$$
In other words
$$\sum_{j\in J^{'}}e_j\grad(F_j)_{\Delta_j}(a)=0.$$
Since $e_j\neq 0$ for all $j\in J^{'}$ the above equality implies
\begin{equation*}\label{bdtvehang}
\rank (J((F_1)_{\Delta_1}, (F_2)_{\Delta_2}, \ldots, (F_m)_{\Delta_m})(a))< m. 
\end{equation*}
Because $F$ is non-degenerate, by the definition of the Newton non-degeneracy, we have $0\in \Delta_j$  for every $j=1,\ldots, m.$, i.e., $\Delta_j$ is a bad face and $d_j=0$. As a consequence, we get
$$F_j(\varphi(s))= (F_j)_{\Delta_j}(a)+ \, \textit{terms of positive exponents}.$$
So $t^0_j= (F_j)_{\Delta_j}(a)$. It means $t^0\in \Sigma_\infty(F).$
\end{proof}

Denote by $\Sigma(F)$ the following set:
$$\Sigma(F):= K_0(F)\cup \Sigma_\infty(F) \cup \bigcup_{i=1}^m \{t=(t_1, \ldots, t_m) \in \Bbb{C}^m \mid t_i = F_i (0, 0, \ldots, 0)\}.$$

\begin{theorem}\label{theoremBifurcationNewton}
Let $F = (F_1, F_2, \ldots, F_m): \Bbb{C}^n \to  \Bbb{C}^m$ be a non-degenerate polynomial map. Then
$$B(F) \subseteq \Sigma(F).$$
\end{theorem}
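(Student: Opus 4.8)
The plan is to combine the two main results already established in the excerpt. By Theorem~\ref{theoremBifurcationM-tame} we have the inclusion $B(F)\subseteq M(F) = K_0(F)\cup M_\infty(F)$, valid for any polynomial map. Since $F$ is assumed non-degenerate, Theorem~\ref{theoremM-tameNewton} gives
$$M_\infty(F)\subseteq \Sigma_\infty(F)\cup \bigcup_{i=1}^m\{t\in\Bbb{C}^m \mid t_i = F_i(0,\ldots,0)\}.$$
Taking the union with $K_0(F)$ on both sides and using the definition of $\Sigma(F)$ immediately yields $B(F)\subseteq M(F)\subseteq \Sigma(F)$, which is the claim.

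So the substantive content of the proof is entirely in the two theorems quoted above, and the final statement is a one-line corollary. First I would recall the chain $B(F)\subseteq K_0(F)\cup M_\infty(F)$ from Theorem~\ref{theoremBifurcationM-tame}; then I would invoke Theorem~\ref{theoremM-tameNewton} to replace $M_\infty(F)$ by its Newton-polyhedral over-approximation; then I would simply observe that
$$K_0(F)\cup\Sigma_\infty(F)\cup\bigcup_{i=1}^m\{t\mid t_i = F_i(0,\ldots,0)\}$$
is by definition $\Sigma(F)$.

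There is essentially no obstacle at this stage: the hard analytic work (the Curve Selection Lemma argument producing the asymptotic curves $\varphi,\lambda$, the Euler-relation bookkeeping separating Case~1 and Case~2, and the vector-field integration trivializing the fibration away from $M(F)$) has already been carried out in the proofs of the two cited theorems. The only point worth a sentence of care is that $\Sigma(F)$ is genuinely a proper subset of $\Bbb{C}^m$ — but this also follows from facts already recorded in the excerpt: $K_0(F)$ is a proper algebraic subset by the algebraic Sard theorem, $\Sigma_\infty(F)$ is semi-algebraic of dimension $<m$, and the $m$ coordinate hyperplanes $\{t_i = F_i(0,\ldots,0)\}$ are proper, so their union cannot be all of $\Bbb{C}^m$. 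Hence $F$ defines a locally trivial fibration over the (dense) complement of $\Sigma(F)$, as asserted.
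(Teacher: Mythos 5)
Your proposal is correct and is exactly the argument the paper gives: the theorem is an immediate corollary of Theorem~\ref{theoremBifurcationM-tame} ($B(F)\subseteq K_0(F)\cup M_\infty(F)$) combined with Theorem~\ref{theoremM-tameNewton} and the definition of $\Sigma(F)$. Nothing is missing.
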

\begin{proof}
The proof is straightforward from Theorem \ref{theoremBifurcationM-tame} and Theorem \ref{theoremM-tameNewton}.
\end{proof}

\begin{remark}{\rm 
(i) A version of Theorem \ref{theoremBifurcationNewton} was performed for mixed functions in \cite{CT}.

(ii) After this paper was accepted for publication, we found out that Theorem \ref{theoremBifurcationNewton} is proved in \cite{CDT} for more general cases.
}\end{remark}

\section{Case of one dimensional}
In this section, we consider the polynomial maps $F$ from $\Bbb{C}^n$ to $\Bbb{C}^{n-1}$ whose fibers are one complex dimensional. We show that for those maps, the problem of determining the bifurcation values is actually a topological problem. More precisely, we prove that if $F$ is a locally $C^0$-trivial fibration then it is a locally $C^{\infty}$-trivial fibration. We start with the following.

\begin{theorem}\label{thmtopo1}
Let $F: M\to N$ be a smooth map between smooth manifolds where $\dim_{\Bbb {R}} M = \dim_{\Bbb {R}} N +2.$ Let $t_0$ be a regular value of $F$. Assume that $F$ is a locally $C^0$-trivial fibration at $t_0$. Then $F$ is a locally $C^{\infty}$-trivial fibration at $t_0$.
\end{theorem}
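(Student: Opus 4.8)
The plan is to reduce the statement to a purely local question about the fibers being curves, and then upgrade $C^0$-triviality to $C^\infty$-triviality using the fact that in real dimension $2$ a topological fiber bundle whose fibers are (open) surfaces carries a smooth structure compatible with the given one. More precisely, since $t_0$ is a regular value, $F^{-1}(t_0)$ is a smooth manifold of real dimension $2$, i.e.\ a (possibly open, possibly disconnected) surface. First I would pick a small ball $D\subset N$ around $t_0$ over which $F$ is $C^0$-trivial, so that $F^{-1}(D)$ is homeomorphic over $D$ to $D\times F^{-1}(t_0)$; shrinking $D$, I may also assume $D$ is a coordinate ball and that $F$ has no critical points over $D$ (regular values are open when $F$ is proper on the relevant piece, and in any case the argument is local along a fiber). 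The goal is to produce a diffeomorphism $F^{-1}(D)\xrightarrow{\sim} D\times F^{-1}(t_0)$ commuting with the projections.

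The key steps, in order, are: (1) Observe that $F|_{F^{-1}(D)}\colon F^{-1}(D)\to D$ is a smooth submersion, hence by Ehresmann-type reasoning it is already a smooth fiber bundle \emph{provided} it is proper; in the non-proper case one still gets local smooth triviality, and the $C^0$ global triviality then rigidifies the gluing. (2) Use the classification of surfaces together with the theorem (Smale, or the uniqueness of smooth structures in dimension $\le 3$) that every topological surface admits a unique smooth structure up to diffeomorphism, and more to the point that a topological bundle with $2$-dimensional fibers can be smoothed: the structure group of the $C^0$-trivial bundle can be reduced from $\mathrm{Homeo}$ of the surface to $\mathrm{Diff}$ because the inclusion $\mathrm{Diff}(S)\hookrightarrow \mathrm{Homeo}(S)$ is a weak homotopy equivalence for surfaces $S$. (3) Conclude that the obstruction to finding a smooth trivialization over the contractible base $D$ vanishes, producing the desired $C^\infty$-trivialization. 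An alternative, more hands-on route: use that $F$ is a submersion over $D$ to integrate a smooth Ehresmann connection, getting a \emph{smooth} local triviality near $t_0$ after possibly shrinking $D$; since $C^\infty$-triviality is what we want and it only needs to hold locally at $t_0$, and a smooth submersion over a ball with all fibers diffeomorphic is smoothly trivial once one knows the bundle is locally trivial, this already finishes — the role of the $C^0$ hypothesis is then just to guarantee that $F^{-1}(D)$ is a single bundle and that the fibers over $D$ are all homeomorphic (hence, being surfaces, diffeomorphic) to $F^{-1}(t_0)$, ruling out wild behavior of the non-proper submersion.

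The main obstacle is exactly the possible \emph{non-properness} of $F$ over $D$: a smooth submersion need not be a locally trivial fibration (e.g.\ the projection to $\bR$ of an open subset of $\bR^2$), so one cannot invoke Ehresmann directly, and this is presumably why the hypothesis of $C^0$-local-triviality is imposed rather than just regularity. The crux of the argument is therefore to leverage the given topological triviality to control the ends of the fibers: I would show that $C^0$-triviality over $D$ forces the submersion $F^{-1}(D)\to D$ to be a smooth fiber bundle, by covering $F^{-1}(t_0)$ by a locally finite family of relatively compact opens, trivializing $F$ smoothly over small balls in $D$ on each such piece (where properness does hold), and patching these smooth local trivializations using a partition of unity on $D$ — the patching is unobstructed because the base is a ball and, crucially, because the $C^0$-triviality tells us the pieces assemble consistently. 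Dimension $2$ of the fiber enters only to ensure the smooth structure on the fiber is essentially unique, so there is no ambiguity in what "the" fiber is; in higher fiber dimensions the statement would be false in general. I would carry out step (1)'s patching carefully, state the surface-smoothing input as a cited black box, and then the final assembly over the contractible $D$ is routine.

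\begin{proof}[Sketch following the plan]
We only sketch the argument. Choose a ball $D\ni t_0$ over which $F$ is $C^0$-trivial and contains no critical values, so $F\colon F^{-1}(D)\to D$ is a smooth submersion and $F^{-1}(t_0)$ is a smooth surface. Over small balls in $D$, working on relatively compact pieces of $F^{-1}(t_0)$, Ehresmann's theorem gives smooth local trivializations; the given $C^0$-trivialization guarantees these pieces glue into a single bundle, and since $\mathrm{Diff}$ of a surface is homotopy equivalent to its $\mathrm{Homeo}$, the structure group reduces smoothly. As $D$ is contractible, the resulting smooth bundle is smoothly trivial, yielding a $C^\infty$-trivialization of $F$ over $D$.
\end{proof}
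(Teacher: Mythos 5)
You have correctly isolated the real difficulty --- the submersion $F^{-1}(D)\to D$ need not be proper, so Ehresmann's theorem does not apply directly --- and you are right that the fiber dimension $2$ is essential. But your resolution of that difficulty is a gap, not a proof. The crux of your step (1) is the claim that one can trivialize $F$ smoothly over small balls on relatively compact pieces of $F^{-1}(t_0)$ ``where properness does hold'' and then patch with a partition of unity, the patching being ``unobstructed'' because of the $C^0$-triviality. This is exactly what fails for non-proper submersions: the local product charts around a relatively compact piece of one fiber need not capture all of the nearby fibers (points of $F^{-1}(t)$ can escape every such tube as $t\to t_0$), and averaging local Ehresmann connections by a partition of unity produces a connection whose horizontal lifts may still run off the ends of the fibers in finite time. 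Saying that the $C^0$-triviality makes the pieces ``assemble consistently'' restates the desired conclusion rather than deriving it. Your ``alternative, more hands-on route'' (integrate a smooth Ehresmann connection) has the same defect, and your step (2) --- the weak homotopy equivalence $\mathrm{Diff}(S)\hookrightarrow\mathrm{Homeo}(S)$ --- addresses a different question (comparing two bundle structures on a space already known to be a smooth bundle), not the question of whether the given smooth map $F$ is a smooth bundle projection at all.

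The paper's proof is much shorter and hinges on a single cited black box that does exactly the work you are missing: Meigniez's theorem (\cite{M}, Corollary 32) that a surjective smooth submersion which is also a fibration in the homotopy-lifting sense, with $\dim E=\dim B+2$, is automatically a locally $C^{\infty}$-trivial fibration. The only thing the paper actually verifies is the easy part: the given $C^0$-trivialization $\Phi\colon F^{-1}(D)\to F^{-1}(t_0)\times D$ is a homeomorphism over $D$, and a projection has the homotopy lifting property, hence so does $F|_{F^{-1}(D)}$; Meigniez then finishes. If you replace your step (1) by a citation of that result (or prove an equivalent statement about submersion-fibrations with surface fibers, which is genuinely nontrivial and is where the dimension hypothesis enters), your argument closes; as written, it does not.
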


Before proving the theorem, let us recall some definitions and results. A homotopy of a continuous map $h : X \to Y$ between topological spaces is a continuous map $H : X\times [0; 1] \to Y$, such that $H(x, 0) = h(x)$ for every $x\in X$.

Let $E, B$ be topological spaces and $\pi: E \to B$ be a continuous map.  We call  $\pi$ a {\it fibration} or equivalently, we say that it has the {\it homotopy lifting property}, if for every continuous map $h : X \to E$ whose source X is a polytope, every homotopy of $\pi \circ  h$ lifts to a homotopy of $h$.

The key result we use in this section is the following.

\begin{lemma}[See \cite{M}, Corollary 32]\label{lm2.4}
Let $\pi: E \to B$ be a (surjective, smooth) submersion-fibration, where $E$ and $B$ are smooth manifolds such that $\dim_{\Bbb{R}} E= \dim_{\Bbb{R}} B + 2$. Then $\pi$ is a locally $C^{\infty}$-trivial fibration.
\end{lemma}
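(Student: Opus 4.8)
\emph{Step 1: reduce to a contractible base.} Local $C^{\infty}$-triviality of $\pi$ is a local question on $B$, so it suffices to trivialise $\pi$ over an arbitrarily small coordinate ball; hence the plan is to assume from the start that $B$ is contractible, say $B\cong\Bbb{R}^{\dim B}$. Two structures are then available. First, since $\pi$ is a submersion, the implicit function theorem gives, around every point of $E$, a product chart $V\times W$ with $V\subseteq B$ and $W\subseteq\Bbb{R}^{2}$ open and $\pi$ the first projection. Second, since $\pi$ is a fibration over the contractible base $B$, it is fibre-homotopy trivial: fixing $b_{0}\in B$ and writing $F:=\pi^{-1}(b_{0})$ --- a smooth boundaryless surface, a priori open and of infinite topological type --- there is a map $\Phi\colon E\to B\times F$ over $B$ whose restriction to each fibre $\pi^{-1}(b)$ is a homotopy equivalence onto $F$. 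In particular all fibres are homotopy equivalent.

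\emph{Step 2: upgrade $\Phi$ to a diffeomorphism over $B$.} This is the substance of the lemma, and it is exactly here that the hypothesis $\dim_{\Bbb{R}}E=\dim_{\Bbb{R}}B+2$ (fibre dimension exactly $2$) is used, through the topology of surfaces. Using the local product charts from Step 1 one first checks that the diffeomorphism type of $\pi^{-1}(b)$ --- equivalently, for a boundaryless surface, its discrete genus/orientability/end data --- is locally constant in $b$, hence constant on the connected base, so every fibre is diffeomorphic to $F$. One then upgrades the fibre-homotopy trivialisation to a genuine $C^{\infty}$ fibre bundle structure with structure group $\mathrm{Diff}(F)$; this step rests on the fact that for a surface $F$ the group $\mathrm{Diff}(F)$ is homotopically ``as large as'' the monoid of homotopy self-equivalences of $F$ that actually occur (for closed $F$ this is the classical computation of $\mathrm{Diff}_{0}(F)$ by Earle--Eells, Gramain and others; for open $F$ one needs the analogous, more delicate statements). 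Once $\pi$ is known to be a fibre bundle with group $\mathrm{Diff}(F)$, it is classified by a map from the contractible base $B$ and is therefore trivialisable; a choice of null-homotopy of this classifying map produces a smooth trivialisation $E\xrightarrow{\ \cong\ }B\times F$ over $B$, which is what we want.

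\emph{The main obstacle.} The hard part is the bundle-ification in Step 2 --- showing that a submersion-fibration with $2$-dimensional fibres is actually a fibre bundle, rather than only a fibration. This cannot be purely formal: for higher-dimensional fibres it is false that submersion-fibrations are bundles (there are submersion-fibrations which are not locally trivial, in connection with the Mather--Thurston phenomena), so one must genuinely exploit surface topology to control the ends of the fibres along the map. An equivalent, more concrete formulation, and the one I would ultimately follow, is to replace the properness hypothesis in Ehresmann's Fibration Theorem by the existence of a \emph{complete} Ehresmann connection: the real task becomes to build, on a submersion-fibration with surface fibres, a Riemannian metric for which the horizontal lifts of vector fields on $B$ are complete --- i.e.\ the lifted flow lines cannot escape into the ends of the $2$-dimensional fibres in finite time, which is where the homotopy lifting property enters --- after which integrating the lifted vector fields produces the trivialisation exactly as in the proof of Theorem~\ref{theoremBifurcationM-tame}. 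Carrying out this completeness construction for surface fibres is the content of \cite{M}, Corollary~32, to which I would refer for the full details.
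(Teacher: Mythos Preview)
The paper does not prove this lemma at all: it is quoted verbatim from Meigniez \cite{M}, Corollary~32, and used as a black box. Your proposal is therefore not competing with any argument in the paper; it is a sketch of what lies behind the cited reference, and at the end you yourself defer to \cite{M} for the substantive step. In that sense you and the paper agree completely.

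As a sketch of Meigniez's argument your outline is broadly accurate --- the reduction to a contractible base, the observation that all fibres are homotopy equivalent surfaces, and above all the identification of the real difficulty (promoting a submersion-fibration with $2$-dimensional fibres to an honest smooth bundle, equivalently producing a complete Ehresmann connection) are all on target, as is your remark that this genuinely uses surface topology and fails for fibres of dimension $\geq 3$. The one place your Step~2 is loose is the appeal to Earle--Eells type results: those compute the homotopy type of $\mathrm{Diff}(F)$ once you already know you have a $\mathrm{Diff}(F)$-bundle, but they do not by themselves turn a fibration into a bundle, and for open surfaces of infinite type (which is the relevant case here, since the fibres are non-compact algebraic curves) the analogous statements are not available in the form you need. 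Meigniez's actual route is closer to your ``equivalent, more concrete formulation'': he works directly with the submersion and the homotopy lifting property to control escape to the ends of the fibres and produce a complete connection. So your final paragraph is the correct summary, and your instinct to cite \cite{M} for the details matches exactly what the paper does.
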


\begin{proof}[Proof of Theorem \ref{thmtopo1}]
It follows from the hypothesis that there exist a neighborhood $D$ of $t_0$ and a homeomorphism ${\Phi}: F^{-1}(D)\rightarrow   F^{-1}(t_0)\times D$ such that the following diagram
\begin{displaymath}
\xymatrix{
F^{-1}(D) \ar[dr]^F  \ar[r]^{\Phi}  &  F^{-1}(t_0)\times D  \ar[d]^{pr_2}  \\
           &  D }
\end{displaymath}
 commutes.
 
 Since $\Phi$ is a homeomorphism and projections are fibrations then, the restriction $F_{|F^{-1}(D)}$ is also a fibration. According to Lemma \ref{lm2.4}, the restriction $F_{|F^{-1}(D)}$ is a $C^{\infty}$-trivial fibration.
\end{proof}

\begin{theorem}\label{mainthmst4}
Let $F: \Bbb{C}^n\to \Bbb{C}^{n-1}$ be a polynomial map and $t_0$ be a regular value of $F$. Then $t_0\notin B_{\infty}(F)$ if and only if $F$ is a locally $C^{0}$-trivial fibration at $t_0$.
\end{theorem}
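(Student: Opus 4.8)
The plan is to show the two directions of the equivalence separately, with the interesting content being the implication that $C^0$-triviality implies $C^\infty$-triviality; the reverse implication is immediate since a locally $C^\infty$-trivial fibration is in particular a locally $C^0$-trivial one, and the complement of $B_\infty(F)$ is by definition the locus where $F$ is a locally $C^\infty$-trivial fibration near infinity. So I would first dispose of the easy direction in one sentence: if $t_0 \notin B_\infty(F)$, then combining a local $C^\infty$-trivialization near infinity with the fact that $t_0$ is a regular value (so $F$ is a proper-enough submersion on a large ball, giving an Ehresmann-type $C^\infty$-trivialization on the compact part) yields a global local $C^\infty$-trivialization over a neighborhood of $t_0$, hence a fortiori a $C^0$-trivial one.

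For the substantive direction, assume $F$ is a locally $C^0$-trivial fibration at the regular value $t_0$. The strategy is to split $\Bbb{C}^n$ into a "compact part" inside a large closed ball $B_R$ and a "part at infinity" outside, and to trivialize each $C^\infty$-ly over a small neighborhood $D$ of $t_0$. On the compact part, since $t_0$ is a regular value, for $R$ large enough the map $F$ restricted to $F^{-1}(D) \cap B_R$ is a submersion onto $D$, and one gets a $C^\infty$-trivialization there by integrating lifted vector fields (Ehresmann on a manifold with boundary, being careful that the boundary sphere $F^{-1}(D)\cap\partial B_R$ is also transverse to $F$, which holds for generic $R$). The key point is the part at infinity: I want to apply Theorem \ref{thmtopo1}. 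The source $M = F^{-1}(D)\setminus \mathrm{int}(B_R)$ and target $N = D$ satisfy $\dim_{\Bbb R} M = 2n = 2(n-1) + 2 = \dim_{\Bbb R} N + 2$, so the dimension hypothesis of Theorem \ref{thmtopo1} is met. To invoke Theorem \ref{thmtopo1} I need $F$ restricted to this region to be a \emph{locally $C^0$-trivial fibration at $t_0$}, and this is exactly where the global $C^0$-triviality hypothesis is used: the given homeomorphism $\Phi \colon F^{-1}(D) \to F^{-1}(t_0)\times D$ does not automatically respect the ball $B_R$, so I must produce a (possibly different) $C^0$-trivialization that is compatible with the decomposition.

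The hard part, therefore, is precisely this compatibility issue: upgrading the ambient $C^0$-triviality to one that restricts to a $C^0$-triviality of the "tube at infinity." The cleanest route I foresee is to choose $R$ large enough that $F^{-1}(t_0) \cap \partial B_R$ is a compact $1$-manifold (a disjoint union of circles, since fibers are one-complex-dimensional curves) meeting $F^{-1}(t_0)$ transversally, then use the homeomorphism $\Phi$ together with a standard isotopy-extension / collaring argument to arrange that the sphere $\partial B_R$ (or a topological hypersurface $C^0$-close to it, separating the compact core from infinity in each fiber) corresponds under $\Phi$ to $(F^{-1}(t_0)\cap \partial B_R) \times D$; shrinking $D$ as needed. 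This makes $F|_{F^{-1}(D)\setminus \mathrm{int}\, B_R}$ a locally $C^0$-trivial fibration at $t_0$, Theorem \ref{thmtopo1} upgrades it to $C^\infty$-trivial, and then one glues the compact-part $C^\infty$-trivialization and the infinity-part $C^\infty$-trivialization along the collar of $\partial B_R$ (smoothing the transition with a partition of unity on the parameter $D$, or more simply by matching the two trivializations on the separating hypersurface) to obtain a genuine locally $C^\infty$-trivial fibration over $D$ — in particular a $C^\infty$-trivialization near infinity, so $t_0 \notin B_\infty(F)$. The gluing and the transversality bookkeeping are routine once the separating-hypersurface compatibility is in hand; the extraction of that compatible $C^0$-trivialization from the given ambient one is the genuine obstacle.
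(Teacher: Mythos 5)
Your overall strategy for the substantive direction diverges from the paper's, and the divergence is exactly where your argument has a hole. The paper does not decompose $\Bbb{C}^n$ into a compact part and a part at infinity at all: it applies Theorem \ref{thmtopo1} directly to the \emph{global} restriction $F_{|F^{-1}(D)}\colon F^{-1}(D)\to D$ over a small neighbourhood $D$ of the regular value $t_0$. The dimension hypothesis $\dim_{\Bbb R}M=\dim_{\Bbb R}N+2$ is already satisfied by this global map (real dimensions $2n$ and $2n-2$), and the hypothesis ``$F$ is a locally $C^0$-trivial fibration at $t_0$'' is verbatim the hypothesis of Theorem \ref{thmtopo1} for it. The conclusion is that $F_{|F^{-1}(D)}$ is $C^\infty$-trivial, i.e.\ $t_0\notin B(F)$, hence a fortiori $t_0\notin B_\infty(F)$. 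No compatibility with a ball $B_R$ ever has to be arranged, and no gluing of two trivializations is needed. (The converse direction is, as you say, standard: a regular value that is regular at infinity is not a bifurcation value, so $F$ is $C^\infty$-, hence $C^0$-, trivial at $t_0$.)

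Your route, by contrast, insists on applying Theorem \ref{thmtopo1} only to $F^{-1}(D)\setminus\mathrm{int}(B_R)$, and this creates the problem you yourself flag as ``the genuine obstacle'': producing from the ambient homeomorphism $\Phi$ a $C^0$-trivialization that respects $\partial B_R$ (or some separating hypersurface). You do not solve it, and it is not routine. There is no off-the-shelf topological isotopy-extension statement that straightens an arbitrary fiber-preserving homeomorphism of an open $2n$-manifold relative to a sphere; the fibers are non-compact curves, $\Phi$ may move $F^{-1}(t)\cap\partial B_R$ wildly as $t$ varies, and its image need not even separate a fixed compact core from the ends uniformly in $t$. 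In addition, Theorem \ref{thmtopo1} as stated is for manifolds without boundary, so applying it to $F^{-1}(D)\setminus\mathrm{int}(B_R)$ would require a separate justification. All of this evaporates once you notice that the theorem can be applied to the whole of $F^{-1}(D)$: the missing idea is that the hypothesis of Theorem \ref{mainthmst4} is already the hypothesis of Theorem \ref{thmtopo1} for the global map, so there is nothing to localize at infinity.
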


\begin{proof}
 Since $t_0$ is a regular value of $F$ then there exists a neighbourhood $D$ of $t_0$ such that the restriction $F_{|F^{-1}(D)}$ is a submersion. The proof is then a consequence of Theorem \ref{thmtopo1}. 
\end{proof}

\begin{theorem}\label{thrm2.2}
Let $t_0\in N$ be a regular value of $F$. Then it is regular at infinity if and only if there exists a small ball $D$ centered at $t_0$, such that the inclusion of each fiber $F^{-1}(t)$ into $F^{-1}(D)$ is a weak homotopy equivalence, for all $t\in D$.
\end{theorem}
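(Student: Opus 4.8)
The plan is to prove the equivalence by identifying the weak homotopy equivalence condition with local triviality, via the two codimension-two statements already at hand, namely Theorem~\ref{thmtopo1} and Lemma~\ref{lm2.4}. Here ``$t_0$ is regular at infinity'' means $t_0\notin B_{\infty}(F)$; since $t_0$ is a regular value (so $t_0\notin K_0(F)$) and $B(F)=K_0(F)\cup B_{\infty}(F)$, this is the same as $t_0\notin B(F)$, i.e.\ $F$ is a locally $C^{\infty}$-trivial fibration at $t_0$.

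I would do the ``only if'' direction first; it is the easy one. If $t_0\notin B(F)$, there are a small ball $D$ around $t_0$ and a diffeomorphism $\Phi\colon F^{-1}(D)\to F^{-1}(t_0)\times D$ over $D$. For each $t\in D$, $\Phi$ carries the inclusion $F^{-1}(t)\hookrightarrow F^{-1}(D)$ to the slice inclusion $F^{-1}(t_0)\times\{t\}\hookrightarrow F^{-1}(t_0)\times D$, which is a homotopy equivalence because $D$ is contractible (a homotopy inverse is $\pr_1$ followed by $x\mapsto(x,t)$). Hence each $F^{-1}(t)\hookrightarrow F^{-1}(D)$ is a homotopy equivalence, in particular a weak homotopy equivalence.

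For the ``if'' direction I would first shrink $D$, using that $t_0$ is a regular value, so that $\pi:=F_{|F^{-1}(D)}\colon F^{-1}(D)\to D$ is a submersion; note $\dim_{\Bbb{R}}F^{-1}(D)=\dim_{\Bbb{R}}D+2$, and $\pi$ is surjective, since $F$ is dominant (so $F^{-1}(D)\neq\emptyset$) and $\emptyset$ cannot include into a nonempty space as a weak homotopy equivalence, whence every fiber over $D$ is nonempty. The hypothesis says precisely that $F^{-1}(t)\hookrightarrow F^{-1}(D)$ is a weak homotopy equivalence for every $t\in D$; as $D$ is contractible, the long exact homotopy sequences of the pairs $(F^{-1}(D),F^{-1}(t))$ then show that $\pi$ is a quasifibration, i.e.\ $\pi_{*}\colon\pi_k(F^{-1}(D),F^{-1}(t),x)\to\pi_k(D,t)$ is an isomorphism for all $k$ and all base points (both groups being trivial). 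The crucial step is to upgrade this to the homotopy lifting property: a codimension-two submersion onto a ball which is a quasifibration is a fibration. Granting it, $\pi$ is a submersion-fibration with $\dim_{\Bbb{R}}F^{-1}(D)=\dim_{\Bbb{R}}D+2$, so Lemma~\ref{lm2.4} makes it a locally $C^{\infty}$-trivial fibration; hence $F$ is a locally $C^{\infty}$-trivial fibration at $t_0$, so $t_0\notin B(F)$ and in particular $t_0\notin B_{\infty}(F)$.

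The hard part will be exactly the passage from the quasifibration property to the homotopy lifting property. A surjective submersion over a ball need not be a fibration, and being a quasifibration does not in general suffice either, so one genuinely needs the codimension-two hypothesis together with the structure theory of submersions from \cite{M} that already underlies Lemma~\ref{lm2.4}. A more hands-on alternative would be to construct, over each sufficiently small sub-ball of $D$, a smooth distribution complementary to $\ker d\pi$ and to check that its flow is complete --- that integral curves do not escape to infinity in finite time --- the weak homotopy equivalence of the fibers being what should rule out such escape; Ehresmann's argument would then give local triviality directly. Everything else --- the ``only if'' direction, the surjectivity of $\pi$, and the final appeal to Lemma~\ref{lm2.4} --- is routine.
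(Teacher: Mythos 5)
Your overall architecture matches the paper's: establish the ``only if'' direction by transporting the slice inclusion through the trivializing diffeomorphism, and for the converse pass from the weak-homotopy-equivalence hypothesis to the homotopy lifting property and then invoke Lemma~\ref{lm2.4}. The ``only if'' direction and the final appeal to Lemma~\ref{lm2.4} are correct. However, the one step you explicitly do not prove --- ``a codimension-two submersion onto a ball which is a quasifibration is a fibration,'' which you introduce with ``Granting it'' --- is precisely the crux of the theorem, so as written the proposal has a genuine gap. The paper closes it by quoting Lemma~\ref{lm2.5} (Meigniez, Corollary 13): a surjective \emph{homotopic submersion} whose fiber inclusions into the total space are weak homotopy equivalences is a fibration. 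Since a smooth submersion has the germ-of-homotopy lifting property, this applies directly to $\pi=F_{|F^{-1}(D)}$ and yields the homotopy lifting property with no further work.

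Two smaller points. First, your diagnosis of where the codimension-two hypothesis is needed is misplaced: Meigniez's Corollary 13 carries no dimension restriction at all, and the hypothesis $\dim_{\Bbb R}E=\dim_{\Bbb R}B+2$ enters only afterwards, in Lemma~\ref{lm2.4}, to upgrade ``fibration'' to ``locally $C^{\infty}$-trivial fibration.'' Also, the relevant hypothesis in that corollary is that each fiber includes into $E$ as a weak homotopy equivalence --- exactly the hypothesis of the theorem --- rather than the quasifibration property you derive from it; over a contractible base these are interchangeable, but the detour through quasifibrations is unnecessary. Second, your proposed ``hands-on'' alternative (constructing a horizontal distribution and arguing that the weak homotopy equivalence of the fibers rules out finite-time escape of integral curves) is only a sketch of a hope; no mechanism is given by which a homotopy-theoretic hypothesis would control completeness of a flow, and this is substantially harder than invoking the quoted result. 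To make the proof complete, replace the ``granted'' step by the citation of Lemma~\ref{lm2.5}.
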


Before proving the theorem, we recall the followings.

\begin{definition}{\rm (\cite{M})
Two homotopies
$$H, H^{'}: X\times [0; 1] \to Y$$
are said to {\it have the same germ} if they coincide in a neighborhood of the subspace $X \times \{0\}$.
}
\end{definition}
\begin{definition}{\rm (\cite{M})
Let $\pi: E \to B$ be a continuous map, where $E, B$ are topological spaces. We call $\pi $ a {\it homotopic submersion}, or equivalently say that it has the
{\it germ-of-homotopy lifting property}, if for every continuous map $h : X \to E$ whose source $X$ is
a polytope, every germ-of-homotopy of $\pi \circ h$ lifts to a germ-of-homotopy of $h$.
}
\end{definition}

\begin{lemma}[See \cite{M}, Corollary 13]\label{lm2.5}
Let $\pi: E \to B$ be a surjective homotopic submersion. If the inclusion of each fiber into $E$ is a weak homotopy equivalence. Then $\pi$ is a fibration.
\end{lemma}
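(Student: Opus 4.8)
The plan is to verify directly that $\pi$ has the homotopy lifting property with respect to every polytope, since that is the notion of fibration in force here. By triangulating the source and lifting cell by cell in order of increasing dimension, this reduces to the relative assertion, to be proved by induction on $n\ge 0$: writing $J^n:=(I^n\times\{0\})\cup(\partial I^n\times[0,1])$, every commutative square with left edge the inclusion $J^n\hookrightarrow I^n\times[0,1]$ and right edge $\pi$ admits a diagonal filler. (For $n=0$ this is path lifting; running the relative assertion over the cells of a triangulation of an arbitrary polytope $X$ produces the homotopy lifting property for $X$.) Observe that surjectivity of $\pi$ guarantees that every fiber $\pi^{-1}(b)$ is non-empty, so the hypothesis and the lifts constructed below make sense.

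First I would isolate the only use of the homotopic-submersion hypothesis: for any polytope $Y$, any $g\colon Y\to E$, and any homotopy $G$ of $\pi\circ g$, the germ-of-homotopy lifting property furnishes a lift of $G$ extending $g$ on a neighbourhood of $Y\times\{0\}$, hence --- $Y$ being compact --- on a slab $Y\times[0,\varepsilon)$ for some $\varepsilon>0$. This already solves the lifting problem over a short time interval and over the bottom-and-sides of a product cell of small time-height. The difficulty, and the reason the fiber hypothesis is indispensable, is that iterating such germ-lifts yields time sub-intervals whose lengths need not be bounded below and, worse, a partial lift built on $Y\times[0,t^\ast)$ need not extend over $Y\times\{t^\ast\}$: the only handle on its limiting behaviour must come from comparison with the fibers over the values of $G$ near time $t^\ast$.

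The argument I would then carry out is an intertwined induction --- on the cell dimension $n$ in the relative assertion, and, for fixed $n$, along a sufficiently fine partition $0=s_0<s_1<\dots<s_N=1$ of the time variable (chosen, using compactness and the inductively constructed data, so that the germ-lift above reaches from $s_{j-1}$ to $s_j$ on the relevant cells). Suppose the lift has been built up to time $s_{j-1}$ and, in the current slab, over $\partial I^n\times[s_{j-1},s_j]$ together with the bottom face $I^n\times\{s_{j-1}\}$. The germ-lift supplies a fresh lift of $G$ over $I^n\times[s_{j-1},s_j]$ agreeing with the data already fixed along the bottom face but not, in general, along the sides; reconciling the two --- equivalently, extending the already-fixed boundary data over the interior of the box --- amounts to killing an obstruction which, transported into a fiber $\pi^{-1}(b)$ over a suitable value $b$ of $G$, lies in a relative homotopy group of the pair $(E,\pi^{-1}(b))$. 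Since the inclusion $\pi^{-1}(b)\hookrightarrow E$ is a weak homotopy equivalence, these relative groups vanish in every degree, so the obstruction dies after modifying the candidate lift on the interior alone, leaving untouched all the boundary data; gluing over the cells of the triangulation and advancing $j$ then completes the construction, with continuity automatic since every gluing is along a closed set. The main obstacle --- the technical core of \cite[Corollary 13]{M} --- is precisely this reconciliation step: identifying the extension obstruction with a relative homotopy class, verifying that the weak-equivalence hypothesis annihilates it compatibly with the surrounding inductive data (in particular handling basepoints and the finitely many path-components of the fibers over the relevant arc of $B$), and organising the triangulation and the time-partition uniformly enough that the germ-lifting step is legitimate on every cell of every slab. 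This is exactly where the interplay between the germ-of-homotopy lifting property and the fiber condition is genuinely exploited.
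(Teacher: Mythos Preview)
The paper does not prove this lemma at all: it is quoted verbatim as Corollary~13 of Meigniez~\cite{M} and used as a black box in the proof of Theorem~\ref{thrm2.2}. There is therefore no ``paper's own proof'' to compare your proposal against.

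As for the content of your sketch, the overall architecture---reduce to relative cube-lifting, use the germ-of-homotopy lifting property to get lifts over short time slabs, and kill the resulting extension obstructions using the vanishing of $\pi_*(E,\pi^{-1}(b))$---is indeed the shape of Meigniez's argument. One point you gloss over is subtler than you indicate: when you iterate germ-lifts, the lengths $s_j-s_{j-1}$ of the slabs are \emph{not} chosen in advance from a single compactness argument; the $\varepsilon$ in each germ-lift depends on the lift already built, so one must argue (as Meigniez does) that the process does not stall at some $t^\ast<1$. Your parenthetical remark that ``a partial lift built on $Y\times[0,t^\ast)$ need not extend over $Y\times\{t^\ast\}$'' correctly flags this, but your proposed resolution---choosing the partition ``sufficiently fine'' beforehand---does not by itself address it. This is precisely the place where Meigniez's proof does real work, and your outline would need to confront it more directly to stand on its own.
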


\begin{proof}[Proof of Theorem \ref{thrm2.2}]
If $t_0$ is regular at infinity then there is a small ball $D$ centered at $t_0$, such that the restriction $F_{|F^{-1}(D)}$ is trivial. It is easy to prove that the inclusion of each fiber $F^{-1}(t)$ into $F^{-1}(D)$ is a weak homotopy equivalence, for all $t\in D$. 

Let us assume that there is a ball $D$ centered at $t_0$, such that the inclusion of each fiber $F^{-1}(t)$ into $F^{-1}(D)$ is a weak homotopy equivalence for all $t\in D$ and the restriction $F_{|F^{-1}(D)}$ is surjective. It deduces from Lemma \ref{lm2.5} that $F: F^{-1}(D) \to D$ is a fibration. By applying Lemma \ref{lm2.4} we obtain that $F$ is differentially trivial at $t_0$.
\end{proof}

\section*{Acknowledgments} The author would like to thank Professor Pham Tien Son for useful discussions. The author would like to thank the referee(s) for carefully examining our paper and providing many valuable comments. The author is also thankful to the Laboratory Jean Alexandre Dieudonne for its hospitality. This research is funded by Vietnam National Foundation for Science and
Technology Development (NAFOSTED) under grant number 101.01-2011.44.

\end{document}